\documentclass{article}
\usepackage{graphicx} 

\usepackage{graphicx}
\usepackage[utf8]{inputenc} 
\usepackage[T1]{fontenc}    
\usepackage{hyperref}      
\usepackage{url}            
\usepackage{booktabs}      
\usepackage{amsfonts}      
\usepackage{nicefrac}      
\usepackage{microtype}    
\usepackage{xcolor}        
\usepackage{amsmath}
\usepackage{amsthm}
\usepackage{amssymb} 
\usepackage{algorithm}
\usepackage{algorithmic}
\usepackage{mathtools} 
\usepackage{svg}
\usepackage{comment}
\usepackage{enumitem}

\DeclareMathOperator*{\argmax}{arg\,max}
\DeclareMathOperator*{\argmin}{arg\,min}

\DeclareMathOperator{\R}{\mathbb{R}}

\DeclareMathOperator{\setX}{\mathcal{X}}
\DeclareMathOperator{\I}{ \mathcal{I}} 
\DeclareMathOperator{\dom}{dom}
\DeclareMathOperator{\dist}{dist}

\newtheorem{theorem}{Theorem}[section]
\newtheorem{proposition}[theorem]{Proposition}

\newtheorem{lemma}[theorem]{Lemma}

\theoremstyle{definition}
\newtheorem{definition}[theorem]{Definition}

\newtheorem{assumption}[theorem]{Assumption}

\newcommand{\NN}{{\mathbb N}} 
\newcommand{\RR}{{\mathbb R}}

\title{
Strongly Convex Maximization via the Frank-Wolfe Algorithm with the Kurdyka-\L{}ojasiewicz Inequality}
\author{Fatih S. Akta\c{s}      \and
        Christian Kroer}
\date{}

\begin{document}

\maketitle

\begin{abstract}
We study the convergence properties of the ``greedy'' Frank-Wolfe algorithm with a unit step size, for a convex maximization problem over a compact set. We assume the function satisfies smoothness and strong convexity. These assumptions together with the Kurdyka-\L{}ojasiewicz (KL) property allow us to derive global asymptotic convergence for the sequence generated by the algorithm. Furthermore, we also derive a convergence rate that depends on the geometric properties of the problem. To illustrate the implications of the convergence result obtained, we prove a new convergence result for a sparse principal component analysis algorithm, propose a convergent reweighted $\ell_1$ minimization algorithm for compressed sensing, and design a new algorithm for the semidefinite relaxation of the Max-Cut problem.

\textbf{Keywords} Frank-Wolfe algorithm, Conditional Gradient algorithm, Convex Maximization, Kurdyka-\L{}ojasiewicz inequality, Max-Cut algorithm

\textbf{AMS 2020 Subject Classification} 90C26, 90C30, 49M37, 65K05
\end{abstract}

\section{Introduction}

In this paper, we study the following convex \emph{maximization} model
\begin{equation}
\label{eq:optimization_model}
\begin{array}{ll@{}ll}
\underset{x}\max \quad & g(x)  \\
\text{st.} \quad & x \in \setX, \\
\end{array}
\end{equation}

where the following assumptions are made for this model. 

\begin{assumption}
\label{as:convex_smooth_compact}

Our blanket assumptions are:

\begin{enumerate}

    \item The function $g:\RR^n \mapsto \RR $ is strongly convex and smooth i.e. $\nabla g(x)$ is Lipschitz continuous.
    
    \item The constraint set $\setX \subseteq \R^n$ is a nonempty compact set.
    
\end{enumerate}

\end{assumption}

Notice that we do not assume the convexity of the feasible set $\setX$.

Applications of convex maximization problems are ubiquitous. For example, in optimization models where the objective satisfies economies of scale, the problem becomes a convex maximization problem, and similarly, many binary linear optimization problems can be equivalently modeled as a convex maximization problem (in particular with a quadratic objective) \cite{zwart1974global}. Many machine learning problems can also be formulated as a convex maximization problem. In \cite{mangasarian1996machine}, misclassification minimization and feature selection problems are formulated as convex maximization problems. The classical statistical analysis technique principal component analysis (PCA) can be formulated as a convex quadratic maximization problem. Extensions of the PCA algorithm (without using semidefinite relaxations) are also convex maximization problems, such as sparse PCA \cite{d2004direct}, nonnegative PCA, and nonnegative sparse PCA \cite{zass2006nonnegative}. Furthermore, reweighted $\ell_1$ norm type algorithms used in compressed sensing problems \cite{candes2008enhancing}, low-rank matrix recovery \cite{mohan2010reweighted}, and sparse PCA \cite{aktacs2023pca} are derived from convex maximization problems. Convex maximization problems also naturally arise in graph theory problems, such as the famous Max-Cut problem \cite{goemans1995improved}. In Section \ref{sec:application}, we show that even the computational models for the SDP relaxation of the Max-Cut problem can be represented as a convex maximization problem. In \cite{benson1995concave} and references therein, applications of convex maximization problems are discussed in detail, and many examples of integer linear and integer quadratic programs are shown to be equivalent to convex maximization. Another area where convex maximization naturally emerges is robust optimization when computing the worst-case scenario for a constraint that is convex in the uncertain parameter \cite{selvi2022convex,ben2022algorithm}. Additionally, as discussed in \cite{lipp2016variations}, in the difference of convex functions (DC) framework, early approaches reformulate the problem as a convex maximization. Finally, the convex-concave procedure (CCP) is shown to be a special case of the Frank-Wolfe (FW) algorithm applied to a convex maximization problem in \cite{yurtsever2022cccp}.

Maximizing a convex function is NP-hard for simple models such as maximizing a quadratic function over a hypercube, and even checking local optimality is NP-hard \cite{pardalos1988checking}. Thus, early approaches have been mostly based on linear approximations as overviewed in \cite{pardalos1986methods}. The main drawback of these approaches is that the subproblem cost at each iteration grows, making it computationally inefficient in practice. In \cite{audet2005essays,andrianova2016one}, an overview of all methods to solve convex quadratic maximization problems such as cutting plane, numerical approaches, and decomposition of the feasible set are given. A more recent approach given in \cite{selvi2022convex} adapts methods from robust optimization literature for convex maximization problems with polyhedra as the feasible region or a single nonlinear constraint. Additionally, a two-stage algorithm was proposed in \cite{ben2022algorithm} that computes a good initial point and then uses the gradient ascent algorithm described in \cite{luss2013conditional}. 

In this paper, we focus on the Frank-Wolfe (FW) algorithm or the conditional gradient (CG) algorithm. The FW algorithm is a famous algorithm in machine learning and optimization \cite{bertsekas1997nonlinear}. The original algorithm was proposed to minimize a quadratic function over a polytope \cite{frank1956algorithm}. Later, the algorithm was extended to more general settings \cite{levitin1966constrained,dunn1980convergence}. Yet the popularity of the algorithm in machine learning came much later \cite{hazan2012projection,jaggi2013revisiting}. We refer to \cite{yurtsever2018conditional,yurtsever2019conditional,kerdreux2020accelerating} for recent developments. 

Using the FW algorithm for convex maximization over a polyhedron with a unit step size to generate a finite algorithm that converges to a stationary point was suggested in \cite{mangasarian1996machine}. In \cite{journee2010generalized}, under the assumption of strong convexity of $g$ or strong convexity of the set $\setX$ and a lower bound on the norm of the subgradients of $g$ for the non-differentiable case, a bound on the number of iterates required to produce pair of iterates $(x_{k+1},x_{k})$ with small $\ell_2$ distance was constructed. This was generalized to Bregman distances by utilizing the relatively strong convexity concept in \cite{chaudhury2024competitiveequilibriumchoresdual}. A more generic analysis of the conditional gradient algorithm with a unit step size for convex maximization problems was given in \cite{luss2013conditional}. 

The main contribution of this paper is to prove new convergence results based on the Kurdyka-\L{}ojasiewicz (KL) inequality under strong convexity and smoothness assumptions. Using the KL property, we prove convergence of the point sequence $\{x_k\}_{k \in \NN}$ and also derive a convergence rate in some cases. To the best of our knowledge, this is the first last-iterate convergence result outside the polyhedral setting.  Next, we investigate three applications and derive a new algorithm based on this result. For the use of the KL inequality in optimization, we refer to \cite{bolte2018first,teboulle2020novel,attouch2010proximal,bolte2014proximal} and references therein. 

This paper is organized as follows. In Section \ref{sec:fw}, we briefly overview the FW algorithm and adapt it for maximizing a strongly convex function. Then we discuss the design choices within the algorithm and their consequences for the model assumptions. Then, we give the classical results regarding the FW algorithm for convex maximization problems. Moreover, we give a global convergence result under the KL assumption and also study its convergence rate. In Section \ref{sec:application}, we first give examples of the functions that satisfy the KL property. Next, we recognize previously proposed algorithms as a special case of the GFW algorithm. Finally, to our knowledge, we derive a new algorithm for the SDP relaxation of the Max-Cut problem. 

We follow standard notation and concepts which can be found in \cite{bolte2018first,luss2013conditional}.

\section{Review of Existing Results}
In this section, we briefly review the Frank-Wolfe algorithm, the existing results for the analysis of the Frank-Wolfe algorithm applied to the convex maximization setting, and findings for the gradient-like descent sequences, which will be useful for proving the global convergence result in our setting. 

\subsection{Frank-Wolfe}
\label{sec:fw}
In this section, we briefly review the FW algorithm. Consider the following optimization template

\begin{equation}
\label{eq:cg_setup}
\begin{array}{ll@{}ll}
\underset{x}\min \quad & g(x)  \\
\text{st.} \quad & x \in \setX, \\
\end{array}
\end{equation}

where $g$ is only assumed to be continuously differentiable and the set $\setX$ is a (nonempty) compact convex set. The FW algorithm applied to (\ref{eq:cg_setup}) uses the following algorithmic scheme

\begin{equation}
\label{eq:generic_cg_template}
\begin{array}{ll@{}ll}
s_k \in \underset{y\in \setX}\argmin \ \nabla g(x_k)^T(y-x_k) \\
x_{k+1} = (1-\eta_k) x_k + \eta_k s_k
\end{array}
\end{equation}

where $\eta_k$ is a step size. There are many step size rules that can be adapted, such as the Armijo Rule, the Limited Minimization Rule, and constant step size \cite{bertsekas1997nonlinear}. Standard theory shows that any limit point of the sequence $\{x_k\}_{k\in \NN}$ generated by the FW algorithm is a stationary point, and under additional assumptions, convergence rates can be derived \cite{dunn1980convergence}. Later, an additional convergence rate was proven in terms of the Frank Wolfe gap (see Equation (\ref{eq:fw_gap})) in \cite{lacoste2016convergence}.

The FW algorithm has several advantages: it scales well to large-scale optimization problems since it only requires one minimization of a linear function over the feasible region. In comparison, proximal algorithms require projection onto the set $\setX$, which can be significantly more expensive. Thus, the FW algorithm is one of the most efficient algorithms for the minimization of a function over a structured domain \cite{yurtsever2017sketchy}. In addition, the FW algorithm generates sparse iterates, which can be practical when storage is a concern \cite{jaggi2013revisiting}. Moreover, the iterates generated by the FW algorithm are naturally feasible, and thus, intermediate iterates computed can also be useful. 

\subsection{Maximizing a Strongly Convex Function with Greedy Frank-Wolfe}
\label{sec:strongly_convex_gfw}
Our focus is specifically on maximizing a strongly convex function. The upcoming analysis will show that this problem has nice properties that enables the application of simple algorithms for such problems. Furthermore, under additional assumptions such as Lipschitz gradient and KL property, stronger convergence properties can be obtained. 

The following useful property shows that the restriction to convex functions allows us to work with possibly nonconvex sets without loss because working with the convex hull yields identical results (see \cite[Section 32]{rockafellar1970convex}).

\begin{proposition}
\label{prop:cvx_hull_sup}
Let $g: \RR^d \mapsto \RR$ be a convex function and $S \subset \RR^d$ be an arbitrary set. Let $\textbf{conv}(S)$ denote its convex hull. Then
\begin{enumerate}
    \item $\sup\{g(x): x \in \textbf{conv}(S)\} = \sup\{g(x): x \in S\}$ where the first supremum is attained only if the second is attained. 
    \item if $S$ is compact, then the supremum of $g$ on $S$ is finite and it is attained at some extreme point of $S$. 
\end{enumerate}

\end{proposition}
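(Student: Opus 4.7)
The plan is to prove each part by separating the easy inclusion from the reverse, relying on the defining property of convex combinations and on Bauer's maximum principle for the extreme-point claim.

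For the first equality in part (1), the inclusion $S\subseteq\textbf{conv}(S)$ immediately gives $\sup_{S}g\le\sup_{\textbf{conv}(S)}g$. For the reverse inequality, I would take an arbitrary $y\in\textbf{conv}(S)$ and use Carathéodory's theorem to write it as a finite convex combination $y=\sum_{i=1}^{k}\lambda_i x_i$ with $x_i\in S$, $\lambda_i\ge 0$, $\sum_i\lambda_i=1$. By Jensen's inequality (i.e., convexity of $g$),
\begin{equation*}
g(y)\;\le\;\sum_{i=1}^{k}\lambda_i g(x_i)\;\le\;\max_{1\le i\le k}g(x_i)\;\le\;\sup_{x\in S}g(x).
\end{equation*}
Taking the supremum over $y\in\textbf{conv}(S)$ gives the reverse inequality, so the two suprema coincide.

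For the attainment statement in part (1), suppose the supremum on $\textbf{conv}(S)$ is attained at some $y^*=\sum_i\lambda_i x_i$ with $x_i\in S$. The same Jensen chain above forces $g(y^*)=\sum_i\lambda_i g(x_i)$ and each $g(x_i)\le g(y^*)$. If all inequalities were strict on a set of positive weight, we would get $\sum_i\lambda_i g(x_i)<g(y^*)$, a contradiction; hence at least one $x_i$ achieves $g(x_i)=g(y^*)$, so the supremum on $S$ is attained.

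For part (2), I would first note that a finite-valued convex function on $\mathbb{R}^d$ is continuous everywhere, and that the convex hull of a compact set in $\mathbb{R}^d$ is compact (Carathéodory expresses $\textbf{conv}(S)$ as the continuous image of the compact set $\Delta_{d}\times S^{d+1}$, where $\Delta_d$ is the standard simplex). By the Weierstrass extreme value theorem, $g$ attains its maximum on $\textbf{conv}(S)$, and hence by part (1) it is finite and attained on $S$. To get an extreme point, I would invoke Bauer's maximum principle: a continuous convex function on a compact convex set attains its maximum at an extreme point of that set. Since every extreme point of $\textbf{conv}(S)$ must lie in $S$ itself (an extreme point that were a nontrivial convex combination of two distinct points in $\textbf{conv}(S)$ would contradict extremality), the maximizer can be chosen in $S$, as claimed.

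The only nonroutine step is invoking Bauer's principle; if I wanted to make the argument self-contained I would prove it by induction on the dimension of the smallest face of $\textbf{conv}(S)$ containing a given maximizer, using the fact that a convex function on a segment attains its maximum at an endpoint to reduce the dimension of the face until an extreme point is reached.
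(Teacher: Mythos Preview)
Your argument is correct and is exactly the standard route (Jensen for the equality of suprema, compactness of $\textbf{conv}(S)$ plus Bauer's maximum principle for the extreme-point claim, together with the elementary fact that extreme points of $\textbf{conv}(S)$ lie in $S$). Note, however, that the paper does not supply its own proof of this proposition: it simply records the result and cites \cite[Section~32]{rockafellar1970convex}, so there is nothing to compare against beyond observing that your proof reproduces the classical argument behind that reference.
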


We remark that the restriction of $g$ to be a convex function is also enough to eliminate the step size strategy and use a unit step size at each iteration. This can be seen from the gradient inequality for a convex function $g$

\begin{equation}
\label{eq:subgradient_ineq}
\begin{array}{lllll}
g(y) \geq g(x) + \nabla g(x) ^T (y-x).
\end{array}
\end{equation}

Since the FW algorithm constructs the next iterate by maximizing the gradient inner product, choosing a unit step size maximizes the lower bound given on the right-hand side. We also note that a similar statement can be shown for nondifferentiable $g$ by using subgradients. 

The above observations motivate the GFW algorithm, also known as ConGradU. This is a conditional gradient algorithm with a unit step size. 

\begin{algorithm}[ht]
\caption{Greedy Frank-Wolfe Algorithm (GFW)}
\label{alg:gfw}
\begin{algorithmic}[1]
\STATE{\textbf{Input:} $x_0 \in \setX $}
\FOR{$k = 0,1,2, ...$}
\STATE{$x_{k+1} \in \argmax \{\nabla g(x_k)^T x: x\in \setX \} $}
\ENDFOR
\end{algorithmic}
\end{algorithm}

The GFW algorithm, as presented, does not specify a stopping criterion yet; the following analysis shows there are natural candidates. For example, if the FW gap (see Equation \ref{eq:fw_gap}) or the distance between consecutive iterates is very small, the algorithm can be terminated, since further iterations may not substantially improve the objective value. 

In addition, the second statement in Proposition \ref{prop:cvx_hull_sup} shows that we can always assume the maximization step in Algorithm \ref{alg:gfw} returns an extreme point. This intuition is formalized in the following result, which helps us understand how the algorithm behaves in the later discussion.

\begin{proposition}
\label{prop:cvx_hull_iterate}
Let $g: \RR^d \mapsto \RR$ be a convex function and $\setX \subset \RR^d$ be a nonempty compact set. Given the same initial point $x_0 \in \setX$, the GFW algorithm applied to $g$ on the set $\setX$ produces identical iterates to the GFW algorithm applied to $g$ on the set $\textbf{conv}(\setX)$. 
\end{proposition}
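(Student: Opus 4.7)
The plan is to prove the statement by induction on the iteration index $k$, the key observation being that the inner maximization in Algorithm \ref{alg:gfw} is itself a linear (and hence convex) maximization problem to which Proposition \ref{prop:cvx_hull_sup} applies directly. The base case $k=0$ is trivial since both runs start from the same $x_0 \in \setX \subseteq \textbf{conv}(\setX)$.

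For the inductive step, assume that $x_k$ is produced by both runs and lies in $\setX$. The next iterate in each run is a maximizer of the linear function $y \mapsto \nabla g(x_k)^T y$ over $\setX$ and $\textbf{conv}(\setX)$ respectively. Applying Proposition \ref{prop:cvx_hull_sup} to this linear function with $S = \setX$, the two suprema coincide:
\begin{equation*}
\max\{\nabla g(x_k)^T y : y \in \setX\} = \max\{\nabla g(x_k)^T y : y \in \textbf{conv}(\setX)\},
\end{equation*}
and by part (2) of that proposition the maximum over $\textbf{conv}(\setX)$ is attained at an extreme point of $\textbf{conv}(\setX)$. Since for compact $\setX$ every extreme point of $\textbf{conv}(\setX)$ lies in $\setX$, there is a common maximizer sitting in $\setX$ that both algorithms can select as $x_{k+1}$.

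The only real subtlety, and the step worth being careful about, is the potential non-uniqueness of the argmax: over $\textbf{conv}(\setX)$ the optimal set can be strictly larger than over $\setX$ (for instance, a proper face of $\textbf{conv}(\setX)$ versus only its extreme points), so the claim of identical iterates must be read as saying that both runs can be made to select the same point at every step. This is precisely what the extreme-point reduction in Proposition \ref{prop:cvx_hull_sup}(2) buys us, and it is consistent with the preceding remark in the paper that the maximization step can be assumed to return an extreme point. With that convention in place the induction closes and the two iterate sequences coincide; no further work with the objective $g$ itself (beyond convexity of the linear subproblem) enters the argument.
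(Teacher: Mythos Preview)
Your argument is correct and follows essentially the same route as the paper's proof: both reduce the subproblem to a linear maximization, invoke Proposition~\ref{prop:cvx_hull_sup} to identify the optima over $\setX$ and $\textbf{conv}(\setX)$, and handle non-uniqueness by adopting the convention that an extreme point is always selected. The only minor addition in the paper is the remark that, among tied maximizers of the linear subproblem, extreme points also have at least as large a value of the original convex objective $g$, which gives an intrinsic reason (beyond convention) to prefer them; your explicit induction and the observation that extreme points of $\textbf{conv}(\setX)$ lie in $\setX$ are, conversely, stated more carefully than in the paper.
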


\begin{proof}
Since the GFW algorithm constructs the next iterate by maximizing a linear function (which satisfies concavity) over a compact set, the optimal solution is attained at an extreme point by Proposition \ref{prop:cvx_hull_sup}. If the optimal solution set is not unique, then it will be the convex hull of a subset of extreme points. Since the primary problem is to maximize a convex function, the function value at an extreme point is always larger than or equal to the function value at a non-extreme point. Thus, we can consider only the extreme points of $\setX$. Therefore, the GFW algorithm applied on the set $\setX$ and its convex hull returns identical results. 
\end{proof}

\begin{figure}[ht]
\centering
\includegraphics[width=.49\textwidth]{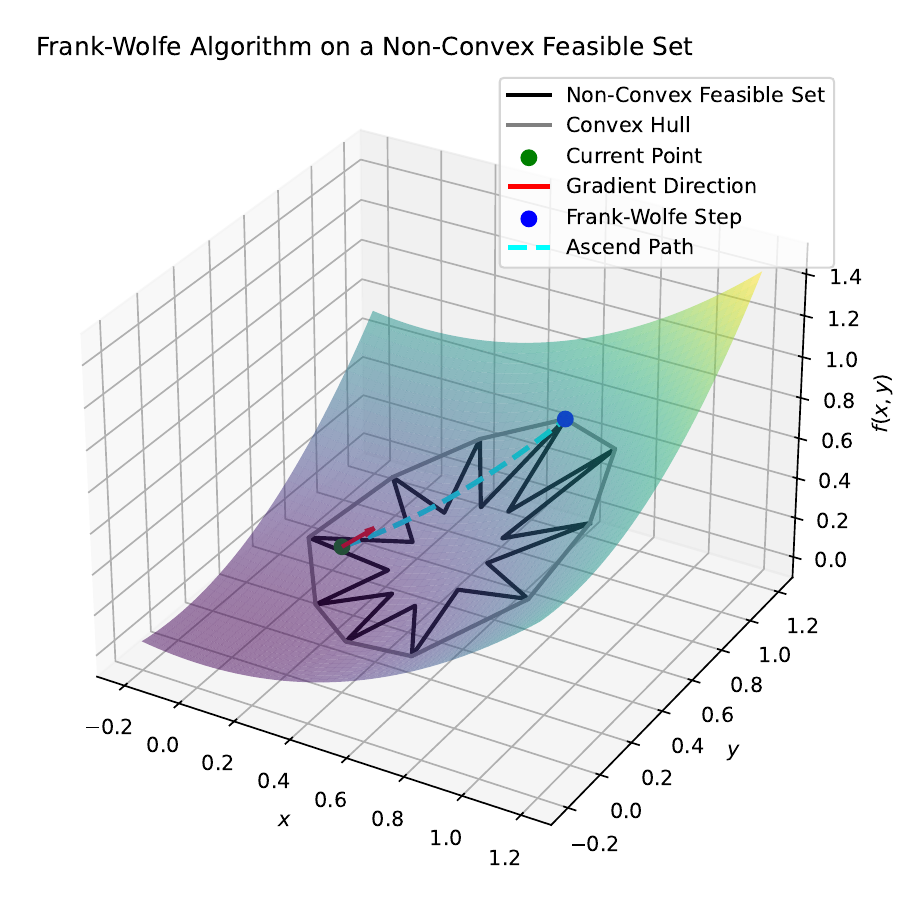}
\includegraphics[width=.49\textwidth]{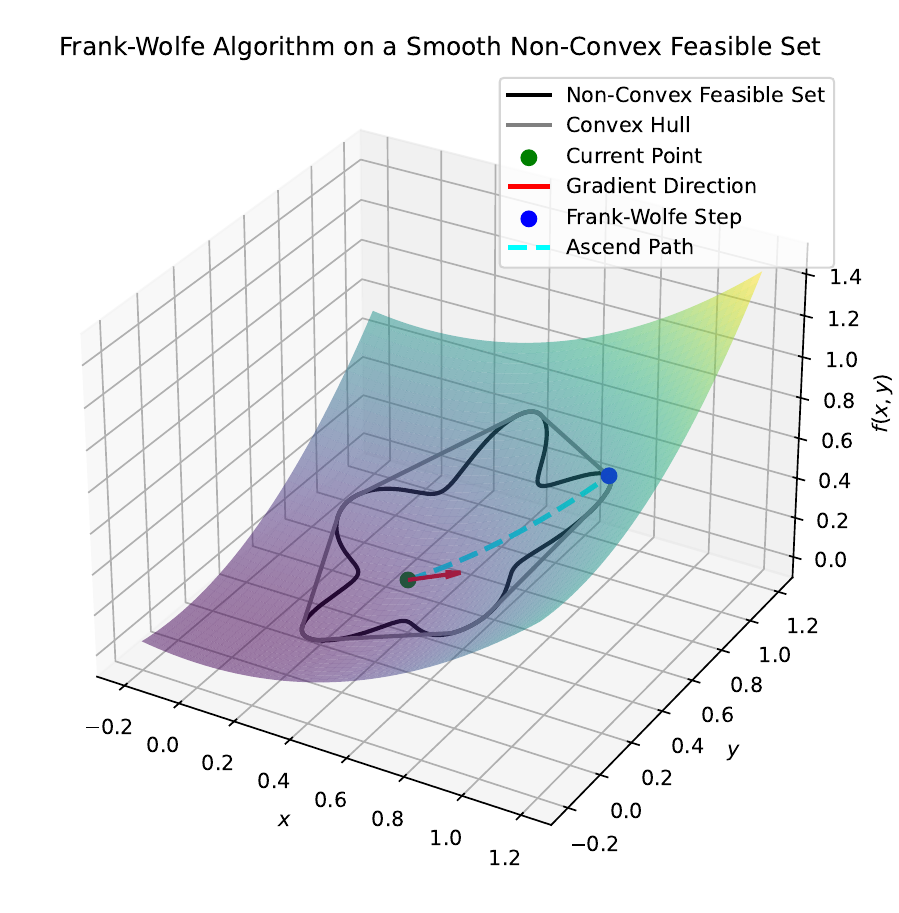}
\caption{Illustration of a single iteration of the GFW algorithm on two nonconvex sets. The objective is strictly increasing along the path of the maximizer of the gradient inner product, thus ignoring the points in between, which may or may not be in the feasible set. }
\label{fig:gfw_visual}
\end{figure}

Figure \ref{fig:gfw_visual} illustrates the behavior of the GFW algorithm for maximizing a convex function as described in Proposition \ref{prop:cvx_hull_iterate}. In each iteration, the algorithm moves to an extreme point of the feasible set. The objective is always increasing along the path, which allows us to use a unit step size and work on non-convex sets. The same property does not hold for concave maximization problems; hence, a suitable step size strategy and convexity of the feasible set are essential.

We next present a duality gap measure $\gamma(x)$, known as the FW gap, which is particularly suited for analyzing FW-type algorithms \cite{jaggi2013revisiting,lacoste2016convergence,luss2013conditional,journee2010generalized,leplat2023conic}

\begin{equation}
\label{eq:fw_gap}
\begin{array}{lllll}
\gamma(x) := \max\{ \nabla g(x)^T(y-x) : y \in \setX \}
\end{array}
\end{equation}

Since $\setX$ is compact, $\gamma(x)$ is well-defined and admits a global maximizer. 

The following lemma is useful for proving the main convergence result.

\begin{lemma}
\label{lemma:iterate_connection}
Let $g: \RR^d \mapsto \RR$ be a strongly convex function, and $\setX \subset \RR^d$ be a nonempty compact set, and $\gamma(x)$ is defined as in (\ref{eq:fw_gap}). Then the sequence $\{x_{k}\}_{k\in \NN}$ generated by the GFW algorithm satisfies
\begin{align}
\gamma(x_k) &\geq 0 &\forall k \in \NN  \label{eq:fw_gap_nonneg} \\
g(x_{k+1}) - g(x_k) &\geq \gamma(x_k) + \alpha \|x_{k+1}-x_k\|_2^2 \quad &\forall k \in \NN \label{eq:iterate_connection} 
\end{align}
\end{lemma}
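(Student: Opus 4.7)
The plan is to observe that both inequalities in the lemma are essentially direct consequences of (i) the feasibility of the current iterate, (ii) strong convexity of $g$, and (iii) the argmax step that defines $x_{k+1}$. No fixed-point or limit argument is needed; everything is a single-iteration estimate.

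For inequality \eqref{eq:fw_gap_nonneg}, I would simply note that $x_k \in \setX$ is itself a feasible candidate in the maximization defining $\gamma(x_k)$. Plugging $y = x_k$ into the definition gives $\gamma(x_k) \geq \nabla g(x_k)^T(x_k - x_k) = 0$, which disposes of the first claim in one line.

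For inequality \eqref{eq:iterate_connection}, the plan is to combine the standard strong convexity inequality with the defining property of the GFW update. First, I would apply strong convexity of $g$ (with modulus $\alpha$ in the convention used by the paper) at the points $x_{k+1}$ and $x_k$ to get
\begin{equation*}
g(x_{k+1}) \geq g(x_k) + \nabla g(x_k)^T(x_{k+1} - x_k) + \alpha \|x_{k+1} - x_k\|_2^2.
\end{equation*}
Next, I would use that $x_{k+1} \in \argmax\{\nabla g(x_k)^T x : x \in \setX\}$, which is equivalent to saying $x_{k+1}$ achieves the maximum defining $\gamma(x_k)$ (since the subtracted term $\nabla g(x_k)^T x_k$ does not depend on the maximizing variable). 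Consequently $\nabla g(x_k)^T(x_{k+1} - x_k) = \gamma(x_k)$. Substituting this identity into the strong convexity bound and rearranging yields exactly \eqref{eq:iterate_connection}.

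The main \emph{potential} subtlety, rather than an obstacle, is bookkeeping of the strong convexity constant: depending on the convention in the paper, strong convexity may be stated with coefficient $\alpha$ or $\alpha/2$ on the quadratic term. Since the lemma states the bound with coefficient $\alpha$, I would align the convention accordingly in the application of strong convexity; beyond this, the proof is essentially a two-line calculation and I do not anticipate any real difficulty.
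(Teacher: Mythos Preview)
Your proposal is correct and matches the paper's own proof essentially line for line: the paper also derives \eqref{eq:fw_gap_nonneg} directly from the definition of $\gamma$ and obtains \eqref{eq:iterate_connection} by writing the strong convexity inequality $g(y) \geq g(x) + \nabla g(x)^T(y-x) + \alpha\|y-x\|_2^2$, plugging in $y = x_{k+1}$, $x = x_k$, and identifying the inner-product term with $\gamma(x_k)$ via the GFW update rule. Your remark about the convention on the strong convexity constant is apt but, as you anticipated, the paper uses the same $\alpha$ convention.
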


\begin{proof}

The proof of (\ref{eq:fw_gap_nonneg}) follows from the definition of $\gamma(x)$. Similarly, the proof of (\ref{eq:iterate_connection}) follows from the gradient inequality for the strongly convex function $g$.

\begin{equation}
\label{eq:iterate_connection1}
\begin{array}{lllll}
g(y) \geq  g(x) + \nabla g(x)^T(y-x) + \alpha \|y-x\|_2^2
\end{array}
\end{equation}

Plugging in $y = x_{k+1}$ and using the definition of $\gamma(x_k)$ and construction of $x_{k+1}$, we get the desired result. 
\end{proof}

Using this lemma, we are ready to present initial convergence results. These initial convergence results are similar in nature to those obtained in other papers studying FW-like algorithms for convex maximization. We will show stronger convergence results in Theorem~\ref{thm:global_convergence_gfw} with the KL assumption.

\begin{theorem}
\label{thm:classic_convergence}
Let $g: \RR^d \mapsto \RR$ be a strongly convex and smooth function, and $\setX \subset \RR^d$ be a nonempty compact set. Let $\{x_{k}\}_{k\in \NN}$ be the sequence generated by the GFW algorithm. Then the following statements hold.
\begin{enumerate}
    \item The sequence of function values $\{g(x_{k})\}_{k\in \NN}$ is monotonically increasing and 
    \begin{align}
    \lim_{k \to \infty} \gamma(x_k) + \alpha\|x_{k+1}-x_k\|_2^2 = 0
    \end{align}

    \item Either for some $k$, the iterate satisfies $\gamma(x_k) = 0$ and the algorithm repeats $x_k$ indefinitely. Otherwise, the algorithm generates an infinite sequence $\{x_{k}\}_{k\in \NN}$ with strictly increasing function values $\{g(x_{k})\}_{k\in \NN}$.

    \item Every limit point of the sequence $\{x_{k}\}_{k\in \NN}$ converges to a stationary point. In other words, any limit point $\Bar{x}$ of the sequence $\{x_{k}\}_{k\in \NN}$ satisfies the following:

    \begin{equation}
    \label{eq:limit_point_stationary}
    \begin{array}{llll}
    \nabla g(\Bar{x})^T(z - \Bar{x}) \leq 0 \quad \forall z \in \setX
    \end{array}
    \end{equation}
    
\end{enumerate}
\end{theorem}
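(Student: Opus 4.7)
All three claims will be derived from Lemma~\ref{lemma:iterate_connection} together with the compactness of $\setX$. The first thing to notice is that the right-hand side of (\ref{eq:iterate_connection}) is a sum of two nonnegative quantities (using (\ref{eq:fw_gap_nonneg}) and $\alpha > 0$), so that inequality is both a monotonicity statement for the function values and a bound on the ``energy'' $\gamma(x_k) + \alpha\|x_{k+1}-x_k\|_2^2$ in terms of the one-step increase $g(x_{k+1})-g(x_k)$.

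For part (1), monotonicity of $\{g(x_k)\}$ is immediate from (\ref{eq:iterate_connection}). Since $g$ is continuous and $\setX$ is compact, $g$ is bounded above on $\setX$, so $\{g(x_k)\}$ converges. Telescoping (\ref{eq:iterate_connection}) gives $\sum_{k=0}^{\infty} \bigl(\gamma(x_k) + \alpha\|x_{k+1}-x_k\|_2^2\bigr) \leq \lim_k g(x_k) - g(x_0) < \infty$, and since the summands are nonnegative they must vanish in the limit.

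For part (2), I consider the dichotomy directly. If $\gamma(x_k) > 0$ for every $k$, then (\ref{eq:iterate_connection}) gives $g(x_{k+1}) - g(x_k) \geq \gamma(x_k) > 0$, i.e.\ strict monotonicity. Otherwise, pick the first $k$ with $\gamma(x_k) = 0$; by the definition of $\gamma$ this means $\nabla g(x_k)^T(y - x_k) \leq 0$ for all $y \in \setX$, so $x_k$ itself is a maximizer of $\nabla g(x_k)^T y$ over $\setX$ and hence a valid choice for $x_{k+1}$. Under the natural convention that the argmax step keeps $x_k$ when $x_k$ is optimal, the algorithm then repeats $x_k$ forever. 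This tie-breaking convention is the one subtle point; the statement of Algorithm~\ref{alg:gfw} leaves the argmax set-valued, so the argument implicitly requires reading ``$\in \argmax$'' to allow selecting $x_k$.

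For part (3), let $\bar{x}$ be a limit point, and extract a subsequence $x_{k_j} \to \bar x$ (which exists because $\setX$ is compact). From part (1) and nonnegativity, $\gamma(x_{k_j}) \to 0$. For any fixed $z \in \setX$, the definition of the FW gap gives $\gamma(x_{k_j}) \geq \nabla g(x_{k_j})^T(z - x_{k_j})$. The right-hand side converges to $\nabla g(\bar x)^T(z - \bar x)$ by continuity of $\nabla g$, yielding $0 \geq \nabla g(\bar x)^T(z - \bar x)$, which is exactly (\ref{eq:limit_point_stationary}). I do not expect any real obstacle: the only non-mechanical step is the tie-breaking remark in part (2); the rest is a standard ``descent + compactness'' argument tailored to the FW gap.
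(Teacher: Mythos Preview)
Your proposal is correct and follows essentially the same route as the paper: both arguments telescope (\ref{eq:iterate_connection}), use boundedness of $g$ on the compact set $\setX$ to conclude summability and hence vanishing of $\gamma(x_k)+\alpha\|x_{k+1}-x_k\|_2^2$, handle the dichotomy in part (2) identically, and deduce stationarity of limit points from $\gamma(x_k)\to 0$ together with continuity of $\nabla g$. The only cosmetic difference is that the paper phrases part (3) as a contradiction (assuming $\nabla g(\bar x)^T(z-\bar x)=\delta>0$ and deriving $\gamma(x_{n_k})>\delta/2$), whereas you pass to the limit directly; your version is slightly cleaner and uses only continuity of $\nabla g$ rather than invoking Lipschitz continuity.
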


\begin{proof}
Lemma \ref{lemma:iterate_connection} shows that the sequence $\{g(x_{k})\}_{k\in \NN}$ is non-decreasing. Summing up the inequalities given by (\ref{eq:iterate_connection}) we get
\begin{equation}
\label{eq:iterate_connection2}
\begin{array}{lllll}
g(x_{k+1}) - g(x_0) \geq \sum\limits_{i=0}^k (\gamma(x_k)  + \alpha \|x_{k+1}-x_k\|_2^2).
\end{array}
\end{equation}
Since $\setX$ is compact and $g$ is continuous, $g(x_k)$ is bounded above by the global maximum, say $g^* = \max\{g(x): x\in \setX \}$. 
Therefore the LHS in \eqref{eq:iterate_connection2} is bounded above by a constant independent of $k$, and the nonnegative series $\sum\limits_{i=0}^{\infty} (\gamma(x_k)  + \alpha \|x_{k+1}-x_k\|_2^2)$ is convergent. It follows that $\gamma(x_k)  + \alpha \|x_{k+1}-x_k\|_2^2$ converges to zero. 

If $\gamma(x_k) = 0$ for some $k$, then the algorithm repeats $x_k$ for the rest of the iterations since it satisfies the stationarity condition given by (\ref{eq:limit_point_stationary}). Otherwise, the algorithm generates strictly increasing function values $g(x_{k+1}) > g(x_k)$. 

Finally, assume that a limit point $\Bar{x}$ does not satisfy the stationarity condition. Then we must have for some $z \in \setX$

\begin{equation}
\label{eq:limit_point_stationary1}
\begin{array}{lllll}
\delta = \nabla g(\Bar{x})^T(z-\Bar{x}) > 0.
\end{array}
\end{equation}

Consider a subsequence $\{x_{n_k}\}_{k\in \NN}$ that converges to $\Bar{x}$. Since the function has a Lipschitz continuous gradient, for sufficiently large $k$ we have

\begin{equation}
\label{eq:limit_point_stationary2}
\begin{array}{lllll}
\nabla g(x_{n_k})^T(z-x_{n_k}) > \frac{\delta}{2}.
\end{array}
\end{equation}

However, this is an immediate contradiction to $\gamma(x_k)$ converging to zero. Thus, we conclude that any limit point is stationary. 
\end{proof}

The statement and proof of Theorem \ref{thm:classic_convergence} is styled after \cite{luss2013conditional}, where convergence analysis is given for general convex functions without the assumption of strong convexity or smoothness. We include an adapted proof for our setting with additional structural assumptions on the problem data, which allow us to show that the FW gap plus the squared distance between the consecutive iterates goes to zero, rather than just the FW gap.

Remark that in the convergence proof of the first statement of Theorem \ref{thm:classic_convergence}, specifically the result $\gamma(x_k) + \alpha \| x_k-x_{k+1} \|_2^2\to 0$, we show the summability of $\sum\limits_{i=0}^{\infty} (\gamma(x_k)  + \alpha \|x_{k+1}-x_k\|_2^2)$. From this fact, we can derive a convergence rate on the minimum occurrence of the FW gap plus the squared distance between the consecutive iterates. In particular, it decreases with a rate of at least $O(1/k)$. This result and a variant under different assumptions were shown in \cite{journee2010low}. This was shown on the FW gap for convex objectives (without strong convexity) in \cite{yurtsever2022cccp}.  A more general result without assuming convexity or concavity but under a bounded curvature assumption was shown in \cite{lacoste2016convergence}, where they prove that the minimum FW gap decreases at a rate of $O(1/\sqrt{k})$.

\subsection{Gradient-Like Descent Sequences}
\label{sec:grad_descent}
In this section, we briefly review the concept of a gradient-like descent sequence introduced in \cite{bolte2014proximal} and refined convergence results shown in \cite{bolte2018first}. In order to be consistent with the existing literature, we consider the following minimization problem in this section:

\begin{equation}
\label{eq:inf_F}
\begin{array}{ll@{}ll}
\underset{x \in \RR^d}\inf F(x),
\end{array}
\end{equation}

where $F: \RR^d \mapsto (-\infty,\infty]$ is a proper lower semicontinuous function that is bounded from below. 

We use the notion of limiting subdifferential and Fermat's rule \cite[Definition 8.3]{rockafellar2009variational}, which characterizes the set of critical points of $F$ as

\begin{equation}
\label{eq:set_of_critical}
\begin{array}{ll@{}ll}
\text{crit}F = \{x \in \RR^d : 0 \in \partial F(x) \}.
\end{array}
\end{equation}

Here, $\partial F(x)$ denotes the limiting subdifferential of $F(x)$ at $x$. To prove a global convergence result for a sequence $\{x_{k}\}_{k\in \NN}$ to a critical point of $F$, the following abstract notion is introduced.

\begin{definition}(gradient-like descent sequence \cite{bolte2018first})
\label{def:grad_descent_seq}
Let $F: \RR^d \mapsto (-\infty,\infty]$ be a proper lower semicontinuous function. A sequence $\{x_{k}\}_{k\in \NN}$ is called a gradient-like descent sequence for minimizing $F$ if the following three conditions are met:
\begin{enumerate}[label=(C\arabic*),leftmargin=*]
    \item Sufficient decrease property. There exists a positive constant $\rho_1$ such that
    
    \begin{equation}
    \label{eq:suff_decrease}
    \begin{array}{ll@{}ll}
    F(x_k) - F(x_{k+1}) \geq  \rho_1 \|x_{k+1}-x_k\|_2^2 \quad \forall k \in \NN.
    \end{array}
    \end{equation}
    
    \item Subgradient lower bound for the iterates gap. There exists $w_{k+1} \in \partial F(x_{k+1})$ and a positive constant $\rho_2$ such that 
    
    \begin{equation}
    \label{eq:subgradient_lower_bound}
    \begin{array}{ll@{}ll}
    \|w_{k+1}\|_2 \leq \rho_2 \|x_{k+1} - x_k\|_2 \quad \forall k \in \NN.
    \end{array}
    \end{equation}

    \item Let $\Bar{x}$ be a limit point of the sequence $\{x_k\}_{k\in \NN}$. Then, $\underset{k \to \infty}\limsup \ F(x_k) \leq F(\Bar{x})$.
\end{enumerate}
\end{definition}

The conditions (C1) and (C2) are usually verified easily for any descent-type algorithm \cite{attouch2009convergence}. The condition (C3) is also a mild requirement, as it is automatically implied when $F$ is continuous.

The concept of gradient-like descent sequence is a powerful abstraction tool that allows us to prove convergence of algorithms such as the proximal alternating linearized minimization (PALM) algorithm introduced in \cite{bolte2014proximal} for nonconvex and nonsmooth problems under the KL assumption. Additionally, the Bregman proximal gradient (BPG) introduced in \cite{bauschke2017descent} was shown to be globally convergent for nonconvex and nonsmooth problems under the KL condition in \cite{bolte2018first}. Now we provide a formal definition of the KL property.

\begin{definition}(Kurdyka-\L{}ojasiewicz property)
\label{def:kl_property}
The function $F$ is said to have the Kurdyka-\L{}ojasiewicz property locally at $\Bar{x} \in \dom \partial F$ if there exists $\eta \in (0,\infty]$, a neighborhood of $U$ of $\Bar{x}$ and a continuous concave function $\varphi: [0,\eta) \mapsto \RR_+$ such that:
\begin{itemize}

    \item $\varphi(0) = 0$,
    
    \item $\varphi$ is $C^1$ on $(0,\eta)$
    
    \item for all $s \in (0,\eta)$, $\varphi'(s) > 0$,
    
    \item and for all $x$ in $U \cap [F(\Bar{x}) < F < F(\Bar{x}) + \eta]$ the Kurdyka-\L{}ojasiewicz inequality holds
    
    \begin{align}
        \varphi'(F(x)-F(\Bar{x}))\dist(0,\partial F(x)) \geq 1
    \end{align}
    
\end{itemize}
\end{definition}

If $F$ has the KL property at each point in the domain of $\partial F$, then $F$ is called a KL function or $F$ satisfies the KL property. It is non-trivial to check the KL property for a given function. However, in practice, many functions satisfy the KL property. In Section \ref{sec:application} we recall some important classes of functions that satisfy the KL property, give examples of such functions, and show applications in selected optimization problems. 

Now we recall two important results for gradient-like descent sequences under the KL property: global convergence of the sequence to a critical point and a rate of convergence. 

\begin{theorem}(Global convergence \cite{bolte2018first})
\label{thm:global_convergence}
Let $\{x_{k}\}_{k\in \NN}$ be a bounded gradient-like descent sequence for minimizing $F$. If $F$ satisfies the KL property, then the sequence $\{x_{k}\}_{k\in \NN}$ has finite length, i.e., $\sum\limits_{k=1}^\infty \|x_{k+1}-x_k\| < \infty$, and it converges to $x^* \in \text{crit}F$.
\end{theorem}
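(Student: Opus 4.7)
The plan is to follow the standard KL-based convergence template of \cite{bolte2014proximal,bolte2018first} and show three things in sequence: (i) the function values $\{F(x_k)\}$ converge to a common value $F^*$ shared by every limit point, (ii) the iterate increments are summable, and (iii) the unique limit of the sequence is critical. First I would use (C1) to deduce that $\{F(x_k)\}$ is nonincreasing; combined with the fact that $F$ is bounded below, this gives $F(x_k)\downarrow F^*$ for some $F^*\in\RR$, and summing (C1) already yields $\sum_k\|x_{k+1}-x_k\|^2<\infty$ (which is weaker than what we need). Fix any limit point $\bar x$ of the bounded sequence, with subsequence $x_{n_j}\to\bar x$. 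Condition (C3) gives $\limsup_j F(x_{n_j})\le F(\bar x)$, while lower semicontinuity of $F$ gives $F(\bar x)\le\liminf_j F(x_{n_j})$, so $F(\bar x)=F^*$. If $F(x_k)=F^*$ for some finite $k$, then (C1) forces $x_{k+1}=x_k$ thereafter, and (C2) gives $0\in\partial F(x_k)$; so assume henceforth $F(x_k)>F^*$ for every $k$.

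Next I would invoke the KL property of $F$ at $\bar x$, which furnishes a neighborhood $U$ of $\bar x$, a threshold $\eta>0$, and a desingularizing function $\varphi$. The standard ``uniformization'' step (cf.\ \cite[Lemma 6]{bolte2014proximal}) shows that because $x_{n_j}\to\bar x$ and $F(x_k)\downarrow F^*$, one can choose an index $k_0$ so that $x_{k_0}$ lies deep enough inside $U$ and $F(x_{k_0})-F^*$ is small enough that a telescoping bound (derived below) forces every subsequent iterate to remain inside $U\cap\{F^*<F<F^*+\eta\}$. On this region the KL inequality combined with (C2) yields
\begin{equation*}
\varphi'(F(x_k)-F^*)\;\ge\;\frac{1}{\dist(0,\partial F(x_k))}\;\ge\;\frac{1}{\rho_2\|x_k-x_{k-1}\|}.
\end{equation*}
Using concavity of $\varphi$, set $\Delta_k:=\varphi(F(x_k)-F^*)-\varphi(F(x_{k+1})-F^*)\ge \varphi'(F(x_k)-F^*)\bigl(F(x_k)-F(x_{k+1})\bigr)$, and combine with the sufficient decrease $F(x_k)-F(x_{k+1})\ge\rho_1\|x_{k+1}-x_k\|^2$ from (C1) to obtain the key recursion
\begin{equation*}
\|x_{k+1}-x_k\|^2 \;\le\; \frac{\rho_2}{\rho_1}\,\Delta_k\,\|x_k-x_{k-1}\|.
\end{equation*}

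The third step is to extract summability. Applying AM-GM in the form $2\sqrt{ab}\le a+b$ gives $2\|x_{k+1}-x_k\|\le \|x_k-x_{k-1}\|+\tfrac{\rho_2}{\rho_1}\Delta_k$, and telescoping this inequality from $k=k_0+1$ to $N$ produces
\begin{equation*}
\sum_{k=k_0+1}^{N}\|x_{k+1}-x_k\|\;\le\;\|x_{k_0+1}-x_{k_0}\|+\frac{\rho_2}{\rho_1}\,\varphi\bigl(F(x_{k_0+1})-F^*\bigr),
\end{equation*}
since the $\Delta_k$ telescope and $\varphi\ge0$. Letting $N\to\infty$ gives the finite-length conclusion $\sum_k\|x_{k+1}-x_k\|<\infty$. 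Finite length makes $\{x_k\}$ Cauchy, hence convergent to some $x^*$; since $\bar x$ was an arbitrary limit point, $x^*=\bar x$. Finally, (C2) gives $w_k\in\partial F(x_k)$ with $\|w_k\|\le\rho_2\|x_k-x_{k-1}\|\to0$, while $x_k\to x^*$ and $F(x_k)\to F(x^*)$ (using (C3) plus lower semicontinuity); closedness of the graph of the limiting subdifferential then yields $0\in\partial F(x^*)$, i.e.\ $x^*\in\mathrm{crit}\,F$.

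The main obstacle is the uniformization step: the KL inequality is only a local statement around $\bar x$, and one must argue that all but finitely many iterates lie in the prescribed neighborhood where the inequality can be chained. This is done by picking the threshold so strictly that the telescoped bound above, applied inductively, prevents any iterate from leaving the neighborhood $U$. The rest of the argument — the concavity trick for $\varphi$, the AM-GM, and the telescoping — is routine once this localization is in place.
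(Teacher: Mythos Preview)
The paper does not give its own proof of this theorem: it is stated as a recalled result from \cite{bolte2018first} (and implicitly \cite{bolte2014proximal}) and used as a black box to deduce Theorem~\ref{thm:global_convergence_gfw}. Your proposal reproduces precisely the standard argument from those references --- sufficient decrease to get monotone convergence of $F(x_k)$, the KL inequality combined with (C2) and concavity of $\varphi$ to get the quadratic recursion $\|x_{k+1}-x_k\|^2\le\tfrac{\rho_2}{\rho_1}\Delta_k\|x_k-x_{k-1}\|$, AM--GM plus telescoping to extract finite length, and the closed-graph property of $\partial F$ for criticality --- so there is nothing to compare against.

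One minor remark on the localization step: what you cite as \cite[Lemma~6]{bolte2014proximal} is actually the \emph{uniformized} KL inequality over the whole limit set $\omega(x_0)$ (which is compact with $F\equiv F^*$ on it), and with that lemma in hand no induction is needed because $\dist(x_k,\omega(x_0))\to 0$ automatically places all late iterates in the KL region. The inductive ``stay-in-$U$'' argument you sketch around a single limit point $\bar x$ is the older Attouch--Bolte style and also works, but requires choosing $k_0$ so that $\|x_{k_0}-\bar x\|+\|x_{k_0+1}-x_{k_0}\|+\tfrac{\rho_2}{\rho_1}\varphi(F(x_{k_0+1})-F^*)$ is smaller than the radius of a ball contained in $U$; you allude to this but do not make the quantitative choice explicit. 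Either route closes the argument.
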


\begin{theorem}(Convergence rate \cite{bolte2018first})
\label{thm:convergence_rate}
Let $\{x_{k}\}_{k\in \NN}$ be a bounded gradient-like descent sequence for minimizing $F$. Assume that $F$ satisfies the KL property, where the desingularizing function $\varphi$ of $F$ is of the following form
\begin{equation}
\label{eq:desingular_function}
\begin{array}{ll@{}ll}
\varphi(s) = cs^{1-\theta}, \quad c > 0, \theta \in [0,1).
\end{array}
\end{equation}
Let $\Bar{x}$ be the limit point of $\{x_{k}\}_{k\in \NN}$. Then the following convergence rates hold:
\begin{enumerate}[leftmargin=*]
    \item If $\theta = 0$, then the sequence $\{x_{k}\}_{k\in \NN}$ converges in a finite number of steps.

    \item If $\theta \in (0,0.5]$, then there exist constants $\omega > 0$ and $\tau \in [0,1)$, such that
    
    \begin{equation}
    \label{eq:linear_convergence}
    \begin{array}{ll@{}ll}
    \|x_k-\Bar{x}\| \leq \omega \tau^k.
    \end{array}
    \end{equation}

    \item If $\theta \in (0.5,1)$, then there exist constants $\omega > 0$ such that

    \begin{equation}
    \label{eq:sublinear_convergence}
    \begin{array}{ll@{}ll}
    \|x_k-\Bar{x}\| \leq \omega k^{-\frac{1-\theta}{2\theta -1}}.
    \end{array}
    \end{equation}
\end{enumerate}
\end{theorem}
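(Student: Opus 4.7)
The plan is to follow the classical Attouch--Bolte--Svaiter route: quantify the KL inequality with the explicit desingularizing function $\varphi(s)=cs^{1-\theta}$ to obtain a recursion on the function residuals $r_k := F(x_k) - F(\Bar{x}) \geq 0$, then translate that recursion into control on the step lengths $\|x_{k+1}-x_k\|$, and finally sum the tail of the step lengths to bound $\|x_k - \Bar{x}\|$. Throughout I would exploit three facts: $r_k$ is nonincreasing by (C1); $r_k \to 0$ because (C3) and the convergence $x_k \to \Bar{x}$ from Theorem~\ref{thm:global_convergence} give $F(x_k) \to F(\Bar{x})$; and $\dist(0,\partial F(x_k)) \leq \rho_2 \|x_k - x_{k-1}\|$ by (C2). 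Since Theorem~\ref{thm:global_convergence} already yields $x_k \to \Bar{x}$, only rates remain to be established.

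First I would dispose of the case $\theta = 0$. Here $\varphi'(s) = c$, so on a punctured neighborhood of $\Bar{x}$ with $F > F(\Bar{x})$ the KL inequality reads $c \cdot \dist(0, \partial F(x_k)) \geq 1$. Combined with (C2), this forces $\|x_k - x_{k-1}\| \geq 1/(c\rho_2) > 0$, which is incompatible with $\|x_{k+1}-x_k\| \to 0$ (a consequence of the finite-length property in Theorem~\ref{thm:global_convergence}) unless $F(x_k) = F(\Bar{x})$ eventually; once this happens (C1) pins the sequence, giving finite termination.

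For $\theta \in (0, 1)$, the KL inequality with $\varphi'(s) = c(1-\theta) s^{-\theta}$ together with (C2) produces
\begin{equation}
r_k^{\theta} \leq c(1-\theta)\rho_2 \|x_k - x_{k-1}\|,
\end{equation}
and combining this with the sufficient decrease $r_{k-1} - r_k \geq \rho_1 \|x_k - x_{k-1}\|^2$ from (C1) yields
\begin{equation}
r_k^{2\theta} \leq K (r_{k-1} - r_k), \qquad K := \frac{(c(1-\theta)\rho_2)^2}{\rho_1}.
\end{equation}
When $\theta \in (0, 1/2]$ we have $2\theta \leq 1$, and since $r_k \to 0$ eventually $r_k \leq 1$, hence $r_k \leq r_k^{2\theta} \leq K(r_{k-1} - r_k)$; rearranging gives $r_k \leq \frac{K}{K+1}\, r_{k-1}$, i.e. $Q$-linear decay of $r_k$. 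Feeding this back into the first inequality produces geometric decay of $\|x_k - x_{k-1}\|$, which sums to the tail bound $\|x_k - \Bar{x}\| \leq \omega \tau^k$.

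For $\theta \in (1/2, 1)$ I would instead work directly with the tail length $S_k := \sum_{j \geq k}\|x_{j+1}-x_j\|$, which dominates $\|x_k - \Bar{x}\|$. Using concavity of $\varphi$, $\varphi(r_k) - \varphi(r_{k+1}) \geq \varphi'(r_k)(r_k - r_{k+1})$, and combining with the KL bound and sufficient decrease (as in the proof of Theorem~\ref{thm:global_convergence}), I would derive $S_k \leq C_1\, \varphi(r_{k-1}) + C_2 \|x_k - x_{k-1}\|$, so $S_k$ inherits the rate of $\varphi(r_k) = c\,r_k^{1-\theta}$. It then suffices to apply the standard lemma for scalar sequences satisfying $u_k^{p} \leq C(u_{k-1} - u_k)$ with $p = 2\theta > 1$, which gives $r_k = O(k^{-1/(2\theta-1)})$ and thus $\varphi(r_k) = O(k^{-(1-\theta)/(2\theta-1)})$, matching the target bound. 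The main obstacle is precisely this sublinear case: extracting the correct exponent from the nonlinear recursion and then transferring it from function residuals to the point sequence through a careful manipulation of the tail sums; the linear and finite-termination cases fall out almost immediately once the recursion is in place.
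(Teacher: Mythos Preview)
The paper does not supply a proof of this theorem: it is stated in Section~\ref{sec:grad_descent} as a result recalled from \cite{bolte2018first} (note the citation in the theorem header), with no argument given. Your proposal is a faithful sketch of the classical Attouch--Bolte argument that underlies that reference---the three cases, the recursion $r_k^{2\theta}\le K(r_{k-1}-r_k)$ obtained by combining the KL inequality with (C1)--(C2), the $Q$-linear contraction when $2\theta\le 1$, and the scalar sublinear lemma when $2\theta>1$---so there is nothing to compare: you have reproduced precisely the proof the paper imports by citation.
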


\section{Global Convergence of GFW under the KL property}
In this section, we extend the convergence results presented in Section \ref{sec:strongly_convex_gfw} under the assumption of the KL property.  

For the global convergence proof, we reformulate our problem as a nonconvex minimization problem as in Section \ref{sec:grad_descent} in order to leverage convergence results obtained for gradient-like descent sequences. Rewriting model (\ref{eq:optimization_model}) gives

\begin{equation}
\label{eq:optimization_model_canon}
\begin{array}{ll@{}ll}
\underset{x}\min \{F(x) =  f(x) + \I_{\setX}(x) \} \\
\end{array}
\end{equation}
\noindent
where $f(x) = - g(x)$ and $\I_{\setX}(x)$ is the indicator function of the set $\setX$. For this problem, the set of critical points can be characterized as

\begin{equation}
\label{eq:set_of_critical_concave_min}
\begin{array}{ll@{}ll}
\text{crit}F = \{x \in \RR^d : 0 \in \partial F(x) \equiv \nabla f(x) + \partial \I_{\setX}(x)  \}.
\end{array}
\end{equation}

Our global convergence result relies on two pillars. Firstly, we show that the sequence generated by the GFW algorithm for problem (\ref{eq:optimization_model}) under Assumption \ref{as:convex_smooth_compact} is a gradient-like descent sequence (see Definition \ref{def:grad_descent_seq}) for minimizing $F$ in model (\ref{eq:optimization_model_canon}). The second is an assumption on the problem data $f$ and $\setX$, namely, that it satisfies the KL property. Then we use the convergence results established for bounded gradient-like descent sequences as recalled from the literature in Section \ref{sec:grad_descent}. 

We will now establish that the sequence generated by the GFW algorithm is a gradient-like descent sequence for $F$.

\begin{lemma}
\label{lemma:gfw_grad_descent}
Consider problem (\ref{eq:optimization_model}) under Assumption \ref{as:convex_smooth_compact}.  Then the sequence $\{x_{k}\}_{k\in \NN}$ generated by the GFW algorithm for solving problem (\ref{eq:optimization_model}) is a gradient-like descent sequence for minimizing $F$ given in (\ref{eq:optimization_model_canon}).
\end{lemma}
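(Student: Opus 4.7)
The plan is to verify each of the three conditions (C1)--(C3) in Definition \ref{def:grad_descent_seq} for $F(x) = f(x) + \I_{\setX}(x)$ with $f = -g$. A preliminary simplification is to invoke Proposition \ref{prop:cvx_hull_iterate}: since the GFW iterates on $\setX$ and on $\textbf{conv}(\setX)$ coincide, and since a limit point of the iterates must lie in $\setX \subseteq \textbf{conv}(\setX)$, I can carry out the subdifferential calculus on the convex set $\textbf{conv}(\setX)$, where $\partial \I_{\textbf{conv}(\setX)}(x)$ reduces to the classical normal cone $N_{\textbf{conv}(\setX)}(x)$. I would state this at the outset so that (C2) does not get tangled with the limiting normal cone of a nonconvex set.

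For (C1), Lemma \ref{lemma:iterate_connection} combined with $\gamma(x_k) \geq 0$ gives $g(x_{k+1}) - g(x_k) \geq \alpha \|x_{k+1} - x_k\|_2^2$. Since $x_k \in \setX$ for all $k$, the indicator term vanishes, so $F(x_k) - F(x_{k+1}) = g(x_{k+1}) - g(x_k) \geq \alpha \|x_{k+1} - x_k\|_2^2$, yielding $\rho_1 = \alpha$.

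For (C2), the update rule says $x_{k+1}$ minimizes the linear function $\nabla f(x_k)^T y$ over $\textbf{conv}(\setX)$, so the first-order optimality condition gives
\begin{equation*}
-\nabla f(x_k) \in N_{\textbf{conv}(\setX)}(x_{k+1}) = \partial \I_{\textbf{conv}(\setX)}(x_{k+1}).
\end{equation*}
Define $w_{k+1} := \nabla f(x_{k+1}) - \nabla f(x_k)$. Then $w_{k+1} \in \nabla f(x_{k+1}) + \partial \I_{\textbf{conv}(\setX)}(x_{k+1}) = \partial F(x_{k+1})$, and Lipschitz continuity of $\nabla f$ (with constant $L$, same as for $g$) yields $\|w_{k+1}\|_2 \leq L \|x_{k+1} - x_k\|_2$, so $\rho_2 = L$.

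For (C3), since $g$ is continuous on $\RR^d$ and each $x_k \in \setX$ with $\setX$ compact, any limit point $\bar{x}$ lies in $\setX$. Along a convergent subsequence $x_{n_k} \to \bar{x}$, we have $F(x_{n_k}) = f(x_{n_k}) \to f(\bar{x}) = F(\bar{x})$, so in fact $\limsup_{k \to \infty} F(x_k) \leq F(\bar{x})$ (with equality on the subsequence). The main obstacle in the whole argument is the potential nonconvexity of $\setX$ in (C2); everything else is a short computation once the equivalence with $\textbf{conv}(\setX)$ is in place.
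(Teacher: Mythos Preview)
Your proposal is correct and follows the same three-step verification as the paper: (C1) via Lemma~\ref{lemma:iterate_connection}, (C2) via the first-order optimality condition for $x_{k+1}$ together with Lipschitz continuity of $\nabla f$, and (C3) via continuity of $f$ and feasibility of the iterates and their limit points.

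The one substantive difference is your detour through $\textbf{conv}(\setX)$ in (C2). The paper works directly with $\I_{\setX}$: since $x_{k+1}$ minimizes $y\mapsto\nabla f(x_k)^T y + \I_{\setX}(y)$, Fermat's rule for the limiting subdifferential already gives $0 \in \nabla f(x_k) + \partial \I_{\setX}(x_{k+1})$ with no convexity assumption on $\setX$, so the nonconvexity is not actually an obstacle. Your route is not wrong, but as written the equality $\nabla f(x_{k+1}) + \partial \I_{\textbf{conv}(\setX)}(x_{k+1}) = \partial F(x_{k+1})$ is not literally true, because $F$ is defined with $\I_{\setX}$, not $\I_{\textbf{conv}(\setX)}$; what you have is the inclusion $N_{\textbf{conv}(\setX)}(x_{k+1}) \subseteq \hat N_{\setX}(x_{k+1}) \subseteq \partial \I_{\setX}(x_{k+1})$, which suffices. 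The paper does make exactly your convex-hull observation, but only \emph{after} the lemma, as a refinement of Theorem~\ref{thm:global_convergence_gfw} to the smaller critical set $\text{crit}^*F$ in~\eqref{eq:set_of_critical_extreme}.
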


\begin{proof}

Using Lemma \ref{lemma:iterate_connection} and $g = -f$ we can write

\begin{equation}
\label{eq:suff_decrease1}
\begin{array}{ll@{}ll}
f(x_{k}) - f(x_{k+1}) \geq \alpha\|x_{k+1}-x_k\|_2^2.
\end{array}
\end{equation}
Using feasibility of the sequence $\{x_k\}_{k\in \NN}$, we arrive at condition (C1) by taking $\rho_1 = \alpha$.

Now, we reformulate construction of $x_{k+1}$ as
\begin{equation}
\label{eq:subgradient_lower_bound0}
\begin{array}{ll@{}ll}
x_{k+1} \in \underset{y}\argmin \{ \nabla f(x_k)^Ty + \I_{\setX}(y)\}
\end{array}
\end{equation}
where we used $\argmax \{g\} = \argmin\{-g\}$ and lifted the set constraint to the objective by using an indicator function. Then, writing the optimality condition for $x_{k+1}$ we get

\begin{equation}
\label{eq:subgradient_lower_bound1}
\begin{array}{ll@{}ll}
0 \in \nabla f(x_k) + \partial \I_{\setX}(x_{k+1})
\end{array}
\end{equation}

\noindent
Therefore, by defining

\begin{equation}
\label{eq:subgradient_lower_bound2}
\begin{array}{ll@{}ll}
w_{k+1} = \nabla f(x_{k+1}) - \nabla f(x_k),
\end{array}
\end{equation}

\noindent
we see that $w_{k+1} \in \partial F(x_{k+1})$. Using the smoothness of $f$ we have

\begin{equation}
\label{eq:subgradient_lower_bound3}
\begin{array}{ll@{}ll}
\|w_{k+1}\|_2 \leq L\|x_{k+1}-x_k\|_2.
\end{array}
\end{equation}

\noindent
Hence, we verify that the condition (C2) holds for $\rho_2 = L$. 

Consider a subsequence $\{x_{n_k}\}_{k\in \NN}$ that converges to some $\Bar{x} \in \setX$ (This occurs by compactness). Since the iterates and the limit point are feasible, $F(x_k) = f(x_k) \ \forall k \in \NN$ and $F(\Bar{x}) = f(\Bar{x})$. Furthermore, by Theorem \ref{thm:classic_convergence} we have that $f(x_k)$ decreases down to a limit, and $f$ is continuous. Combining all these facts yields

\begin{equation}
\label{eq:function_limit1}
\begin{array}{ll@{}ll}
\underset{k \to \infty}\lim F(x_{n_k}) = \underset{k \to \infty}\lim f(x_{n_k}) = f(\Bar{x}) = F(\Bar{x})
\end{array}
\end{equation}

\noindent
Consequently, we obtain condition (C3). 

\end{proof}

Now we are ready to establish a global convergence result under the KL property. 

\begin{theorem}(Global convergence of GFW)
\label{thm:global_convergence_gfw}
Consider problem (\ref{eq:optimization_model_canon}) under Assumption \ref{as:convex_smooth_compact} with the additional assumption that $F$ satisfies the KL property. Let $\{x_{k}\}_{k\in \NN}$ be a sequence generated by the GFW algorithm for maximizing (\ref{eq:optimization_model}). Then the sequence $\{x_{k}\}_{k\in \NN}$ converges to some $x^*$ that lies in the set $\text{crit}F$.
\end{theorem}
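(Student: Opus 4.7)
The plan is to assemble the theorem as a direct application of the abstract machinery recalled in Section~\ref{sec:grad_descent}. Essentially all the real work has been done already, so the proof will be mostly bookkeeping: I just need to check that the hypotheses of Theorem~\ref{thm:global_convergence} are met by the GFW sequence applied to $F$.

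First I would note that the sequence $\{x_k\}_{k\in\NN}$ is automatically bounded: by construction of the GFW iteration (the argmax in Algorithm~\ref{alg:gfw} is taken over $\setX$), every iterate lies in $\setX$, and $\setX$ is compact by Assumption~\ref{as:convex_smooth_compact}. Second, Lemma~\ref{lemma:gfw_grad_descent} already establishes that $\{x_k\}_{k\in\NN}$ is a gradient-like descent sequence for minimizing $F$ in the sense of Definition~\ref{def:grad_descent_seq}, i.e.\ conditions (C1), (C2), and (C3) hold. Third, the KL property of $F$ is given as part of the hypothesis of the theorem.

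With these three ingredients in hand, Theorem~\ref{thm:global_convergence} applies verbatim and gives that $\{x_k\}_{k\in\NN}$ has finite length and converges to some limit point $x^* \in \text{crit}F$, which is exactly the conclusion sought. I would close the proof by briefly remarking that the characterization of $\text{crit}F$ in \eqref{eq:set_of_critical_concave_min} (the sum rule $0 \in \nabla f(x^*) + \partial \I_{\setX}(x^*)$) recovers the stationarity condition \eqref{eq:limit_point_stationary}, so this strengthens Theorem~\ref{thm:classic_convergence} from ``every limit point is stationary'' to ``the whole sequence converges to a single stationary point.''

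Since the substantive obstacle, the verification of (C1)--(C3), has already been discharged in Lemma~\ref{lemma:gfw_grad_descent}, there is no real difficulty left; the only thing to be careful about is to cite boundedness explicitly (rather than sneaking it into the KL step) and to make clear that $F$ being KL everywhere on $\dom\partial F$ is what allows Theorem~\ref{thm:global_convergence} to be invoked at the particular limit point produced by the compactness argument.
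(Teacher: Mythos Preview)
Your proposal is correct and matches the paper's own proof essentially verbatim: the paper simply states that the result follows from Lemma~\ref{lemma:gfw_grad_descent} and Theorem~\ref{thm:global_convergence}, which is exactly the assembly you carry out (with the boundedness of $\{x_k\}$ coming from compactness of $\setX$).
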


The proof of this theorem follows from Lemma \ref{lemma:gfw_grad_descent} and Theorem \ref{thm:global_convergence}. Moreover, the statement can be slightly sharpened by using Proposition \ref{prop:cvx_hull_iterate}. Since the sequence generated by applying the algorithm instead to the convex hull of $\setX$ does not change anything, we can consider the following refinement of the set of critical points

\begin{equation}
\label{eq:set_of_critical_extreme}
\begin{array}{ll@{}ll}
\text{crit}^*F = \{x \in \RR^d : 0 \in \partial F(x) \equiv \nabla f(x) + \partial \I_{\textbf{conv}(\setX)}(x)  \}.
\end{array}
\end{equation}

In this definition, $\partial \I_{\textbf{conv}(\setX)}(x)$ reduces to the classical subdifferential since it is an indicator function of a convex set. Then the statement of Theorem \ref{thm:global_convergence_gfw} holds with $x^* \in \text{crit}^*F$. This observation shows that the GFW algorithm will not converge to a stationary point that is not an extreme point of the set $\setX$. 

Additionally, we also present a result on the rate of convergence which applies under the same conditions as Theorem \ref{thm:global_convergence_gfw}.

\begin{theorem}(Convergence rate of GFW)
\label{thm:convergence_rate_gfw}
Consider problem (\ref{eq:optimization_model_canon}) under Assumption \ref{as:convex_smooth_compact}. Let $\{x_{k}\}_{k\in \NN}$ be a sequence generated by the GFW algorithm for maximizing (\ref{eq:optimization_model}). Assume further that $F$ satisfies the KL property, where the desingularizing function $\varphi$ of $F$ has the following form
\begin{equation}
\label{eq:desingular_function_gfw}
\begin{array}{ll@{}ll}
\varphi(s) = cs^{1-\theta}, \quad c > 0, \theta \in [0,1).
\end{array}
\end{equation}
Let $\Bar{x}$ be the limit of the sequence $\{x_{k}\}_{k\in \NN}$. Then the following convergence rates hold:
\begin{enumerate}[leftmargin=*]
    \item If $\theta = 0$, then the sequence $\{x_{k}\}_{k\in \NN}$ converges in a finite number of steps.

    \item If $\theta \in (0,0.5]$, then there exist constants $\omega > 0$ and $\tau \in [0,1)$, such that
    
    \begin{equation}
    \label{eq:linear_convergence_gfw}
    \begin{array}{ll@{}ll}
    \|x_k-\Bar{x}\| \leq \omega \tau^k.
    \end{array}
    \end{equation}

    \item If $\theta \in (0.5,1)$, then there exist constants $\omega > 0$ such that

    \begin{equation}
    \label{eq:sublinear_convergence_gfw}
    \begin{array}{ll@{}ll}
    \|x_k-\Bar{x}\| \leq \omega k^{-\frac{1-\theta}{2\theta -1}}.
    \end{array}
    \end{equation}
\end{enumerate}
\end{theorem}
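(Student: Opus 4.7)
The plan is to derive Theorem \ref{thm:convergence_rate_gfw} as an immediate consequence of the abstract rate result stated in Theorem \ref{thm:convergence_rate}, exactly in the same way that Theorem \ref{thm:global_convergence_gfw} is obtained from Theorem \ref{thm:global_convergence}. The only real work is to verify, one more time, that the hypotheses of the abstract theorem are satisfied by the GFW sequence for the reformulated minimization problem (\ref{eq:optimization_model_canon}).

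First I would point out that the sequence $\{x_k\}_{k \in \NN}$ is automatically bounded, because every iterate lies in the compact set $\setX$ (the GFW update selects a point in $\setX$, and by Proposition \ref{prop:cvx_hull_iterate} we may equivalently view this as an iteration in $\textbf{conv}(\setX)$, which is still bounded). Next, I would invoke Lemma \ref{lemma:gfw_grad_descent}, which already establishes that $\{x_k\}_{k \in \NN}$ is a gradient-like descent sequence for $F = f + \I_{\setX}$ (with constants $\rho_1 = \alpha$ and $\rho_2 = L$). Together with the standing KL assumption, and with the hypothesis that the desingularizing function has the \L{}ojasiewicz form $\varphi(s) = c s^{1-\theta}$ with $c > 0$ and $\theta \in [0,1)$, all assumptions of Theorem \ref{thm:convergence_rate} are in force.

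Having checked the hypotheses, I would then simply invoke Theorem \ref{thm:convergence_rate} verbatim. Case $\theta = 0$ yields finite termination; case $\theta \in (0, 1/2]$ yields the linear rate $\|x_k - \bar{x}\| \leq \omega \tau^k$ for some $\omega > 0$ and $\tau \in [0,1)$; case $\theta \in (1/2, 1)$ yields the sublinear rate $\|x_k - \bar{x}\| \leq \omega k^{-(1-\theta)/(2\theta - 1)}$. The existence of the limit point $\bar{x}$ itself is guaranteed by Theorem \ref{thm:global_convergence_gfw}, so the statement is well-posed.

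Since this proof is essentially a citation of the abstract machinery from \cite{bolte2018first} combined with Lemma \ref{lemma:gfw_grad_descent}, there is no genuine obstacle to overcome; the paper's real technical input sits in Lemma \ref{lemma:iterate_connection} and Lemma \ref{lemma:gfw_grad_descent}. The only item worth a brief comment in the write-up is that the same remark made after Theorem \ref{thm:global_convergence_gfw} applies here: one may replace $F$ by its version on $\textbf{conv}(\setX)$, so that the KL property only needs to be verified against the (classical) normal cone subdifferential of the convex hull, which is often easier to check for applications such as the ones studied in Section \ref{sec:application}.
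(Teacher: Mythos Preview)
Your proposal is correct and matches the paper's own proof essentially verbatim: the paper simply states that the result follows from Lemma \ref{lemma:gfw_grad_descent} together with Theorem \ref{thm:convergence_rate}, and your write-up spells out exactly those two ingredients (boundedness from compactness of $\setX$, plus the gradient-like descent property). The additional remarks you include about the existence of $\bar{x}$ via Theorem \ref{thm:global_convergence_gfw} and about passing to $\textbf{conv}(\setX)$ are consistent with the paper's surrounding discussion.
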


Similarly, the proof of this theorem follows from Lemma \ref{lemma:gfw_grad_descent} and Theorem \ref{thm:convergence_rate}. 

We remark that the convergence rate reported by Theorems \ref{thm:convergence_rate} and \ref{thm:convergence_rate_gfw} are asymptotic in the sense that, it is difficult to estimate the exponent $\theta$ of the desingularizing function $\varphi$ and the parameters $\omega,\tau$ for a given problem. Furthermore, for the particular instance when the feasible set $\setX$ is polyhedral, Theorem \ref{thm:convergence_rate_gfw} does not give a new convergence result; finite convergence to a stationary point in this setting was first pointed out in \cite{mangasarian1996machine}. In Appendix \ref{ap:local_min_polyhedra}, we improve this statement as follows: under the assumption that we can check alternative optimal solutions when working over a polytope, the GFW algorithm converges to a strict local minimum in a finite number of steps since in our setting, $g$ is assumed to be strongly convex.

\section{Applications}
\label{sec:application}

In this section, we show various applications of the GFW algorithm on strongly convex maximization problems thanks to the rich family of KL functions. Given a function, it might be hard to check if it satisfies the KL property. However, for a large class of functions, it holds true. In \cite{bolte2007lojasiewicz}, it was proven that it holds for the class of semialgebraic functions. A non-exhaustive list of semialgebraic functions and sets is as follows:

\begin{itemize}
    \item Real polynomial functions.
    \item Indicator functions of semialgebraic sets.
    \item Finite sums and products of semialgebraic functions.
    \item $\ell_p$ norms when $p>0$ is rational and the $\ell_0$ norm.
    \item Cone of positive semidefinite matrices, Stiefel manifold. 
\end{itemize}

Furthermore, it was shown in \cite{bolte2007clarke} that the class of definable functions satisfies the KL property. This deep result shows that the KL property holds for subanalytic functions, in particular globally subanalytic functions, and functions definable on the log-exp structure. 

Moreover, functions that are semialgebraic or globally subanalytic satisfy the KL property with a desingularizing function of the form $\varphi(s) = cs^{1-\theta}$ for some $\theta \in [0,1) \cap \mathbb{Q}$. 

For the proofs of these statements and additional properties, we refer to \cite{attouch2009convergence,attouch2010proximal,attouch2013convergence,bolte2014proximal}.

\subsection{Reweighted Optimization for Minimizing the \texorpdfstring{$\ell_0$}{l0} Norm}

An important problem in signal processing is to recover a sparse solution from a small number of observations. Mathematically speaking, the problem can be formulated as model (\ref{eq:compressed_sensing}) (see \cite{donoho2006compressed,candes2008enhancing,bruckstein2009sparse}). In this model, the $\ell_0$ norm $\|x\|_0$ counts the number of nonzero components of $x \in \RR^n$, $A$ is a given matrix $A \in \RR^{m\times n}$, and observation vector $b\in \RR^m$. This problem is computationally intractable because of its combinatorial nature due to the sparsity constraint. Therefore, proxy alternatives to problem (\ref{eq:compressed_sensing}) are often used. We focus on one such approach, which aligns with our work. In \cite{candes2008enhancing}, the $\ell_0$ norm is replaced by a regularized logarithm, which results in the model (\ref{eq:rwl1}). 

\begin{minipage}[ht]{0.45\textwidth}
\begin{equation}
\label{eq:compressed_sensing}
\begin{array}{cccc}
\underset{x}\min & \|x\|_0  \\
\text{st.} & Ax = b,
\end{array}
\end{equation}
\end{minipage}
\hfill 
\begin{minipage}[ht]{0.45\textwidth}
\begin{equation}
\label{eq:rwl1}
\begin{array}{rl}
\underset{x}\min & \sum\limits_{i=1}^n\log(\epsilon + x_i^+ + x_i^-)  \\
\text{st.} & Ax^+ - Ax^- = b \\
& x^+ \geq 0, x^- \geq 0.
\end{array}
\end{equation}
\end{minipage}

In model (\ref{eq:rwl1}), the $\ell_0$ norm $\|x\|_0$ is approximated by the regularized logarithm $\log(\epsilon + |x|)$. This approach is called reweighted $\ell_1$ minimization, abbreviated as RWL1.  However, since the absolute value is not differentiable, a smooth coupling trick is applied by decomposing $x$ into positive and negative parts, i.e. $x_i^+ = \max(0,x_i)$ and $x_i^- = \max(0,-x_i)$. A similar trick was also used in the LP reformulation of the $\ell_1$ norm relaxation of the $\ell_0$ norm in \cite{donoho2006compressed}. However, the model (\ref{eq:rwl1}) is still not convex. In fact, the new objective is a concave function. Therefore, instead of solving the problem directly, an iterative algorithm is proposed to minimize the function locally in \cite{candes2008enhancing}. Let $\setX = \{(x^+,x^-) \in \RR^{2n}: Ax^+-Ax^- = b, x^+ \geq 0, x^- \geq 0 \}$ denote the feasible region. Starting from an initial guess $x_0 = (x_0^+,x_0^-) \in \RR^{2n}$, the algorithm generates a sequence of vectors by the following recursion;

\begin{equation}
\label{eq:rwl1_algorithm}
\begin{array}{ll@{}ll}
x_{n+1} \in \underset{x}\argmin \left\{ \sum_{i=1}^{n}\dfrac{x^+_i + x^-_i}{(x^+_n)_i + (x^-_n)_i + \epsilon} :  (x^+,x^-) \in \setX \right\}.\\
\end{array}
\end{equation}

One can recognize the iterative algorithm described in (\ref{eq:rwl1_algorithm}) as the GFW algorithm applied to model (\ref{eq:rwl1}) with $f = \sum\limits_{i=1}^n\log(\epsilon + x_i^+ + x_i^-), \setX = \{(x^+,x^-) \in \RR^{2n}: Ax^+-Ax^- = b, x^+ \geq 0, x^- \geq 0\}$. The convergence results reported in \cite{candes2008enhancing} were limited to a monotonic decrease of $f(x_k)$ to a limit. Our convergence results do not apply immediately to the RWL1 algorithm since this formulation violates Assumption \ref{as:convex_smooth_compact} because the function $f$ is not strongly concave. To guarantee last-iterate convergence, an alternating minimization algorithm with proximal regularization was proposed in \cite{attouch2010proximal}, which we will refer to as RWL1 Prox in the upcoming discussion. To remedy the convergence issue, instead of proximal regularization, we propose the following minor modification to model (\ref{eq:rwl1})

\begin{equation}
\label{eq:rwl1_star}
\begin{array}{rl}
\underset{x}\min & \sum\limits_{i=1}^n\log(\epsilon + x_i^+) + \log(\epsilon + x_i^-) \\
\text{st.} & Ax^+ - Ax^- = b \\
& x^+ \geq 0, x^- \geq 0.
\end{array}
\end{equation}

In this formulation, the variables are no longer coupled inside the logarithm function, and we will argue that the function is strongly convex. Now applying the GFW algorithm to model (\ref{eq:rwl1_star}) gives;

\begin{equation}
\label{eq:rwl1_star_algorithm}
\begin{array}{ll@{}ll}
x_{n+1} \in \underset{x}\argmin \left\{ \sum_{i=1}^{n}\dfrac{x^+_i}{(x^+_n)_i + \epsilon}+\dfrac{x^-_i}{ (x^-_n)_i + \epsilon} :  (x^+,x^-) \in \setX \right\}.\\
\end{array}
\end{equation}

Decoupling variables in this fashion changes the penalization term. In the algorithm described by  (\ref{eq:rwl1_algorithm}), in each iteration, the positive and negative part of $x$ is penalized with the same weight, while the second algorithm (\ref{eq:rwl1_star_algorithm}) penalizes them separately with different weights. While this modification is very minor and can be justified for theoretical convergence purposes, it can also be derived from decoupling variables in the original formulation. Decomposing $x$ into positive and negative parts in model (\ref{eq:compressed_sensing}), we arrive at the following model 

\begin{equation}
\label{eq:compressed_sensing_decomposed}
\begin{array}{cccc}
\underset{x}\min & \|x^+\|_0 + \|x^-\|_0  \\
\text{st.} & Ax^+-Ax^- = b \\
& x^+ \geq 0, x^- \geq 0.
\end{array}
\end{equation}

The optimal solution set of problems (\ref{eq:compressed_sensing}) and (\ref{eq:compressed_sensing_decomposed}) is equivalent with the correspondence $x = x^+-x^-$. From here, replacing the $\ell_0$ norm with $\log(\epsilon + x)$ (considering the non-negativity) gives the model (\ref{eq:rwl1_star}). 

Let us define $f = \log(\epsilon + x_i^+) + \log(\epsilon + x_i^-)$ and $\setX$ as before. Since $x^+$ and $x^-$ are nonnegative, $f$ is a Lipschitz continuous function since the Hessian is bounded. Furthermore, since the GFW algorithm is a descent (ascent for maximization) algorithm by Theorem \ref{thm:classic_convergence}, and $f$ has bounded level sets, $(x_k^+,x_k^-)$ will remain in a bounded set. Thus, we can add an additional non-binding constraint $\|(x_k^+,x_k^-)\|_2 \leq M$ for sufficiently large $M > 0$ without any loss. This allows us to claim that $\setX$ is compact since it is bounded and closed. Additionally, since $(x_k^+,x_k^-)$ will remain in a bounded set, the Hessian of $f$ is bounded below by some negative multiple of identity, which gives strong concavity, i.e., $-f$ is strongly convex. Thus, Assumption \ref{as:convex_smooth_compact} is satisfied. Moreover, $F = f + \I_{\setX}(x)$ satisfies the KL property since it is definable in the log-exp structure. Furthermore, it is globally subanalytic on bounded boxes, so its desingularizing functions are in the form of $\varphi = cs^\theta$ for some $c > 0$ and $\theta \in (0,1]$. Therefore, the behavior of the sequence generated by Algorithm (\ref{eq:rwl1_star_algorithm}) is governed by Theorem \ref{thm:global_convergence_gfw} and Theorem \ref{thm:convergence_rate_gfw}.

We numerically compare the original reweighted $\ell_1$ minimization algorithm \cite{candes2008enhancing} (referred as RWL1 in figures), the proximal regularized version of the reweighted $\ell_1$ minimization algorithm proposed in \cite{attouch2010proximal} (referred as RWL1 Prox in figures) and the algorithm described in (\ref{eq:rwl1_star_algorithm}) which we call split reweighted $\ell_1$ minimization algorithm (referred as RWL1 Split in figures). We use the same benchmark used in \cite{candes2008enhancing}. First, we generate a sparse signal $x \in \RR^{256}$ with cardinality $\|x\|_0 = s$. Nonzero indices of $x$ are chosen randomly, and each nonzero value is sampled from a standard normal distribution. Then we generate a matrix $A \in \RR^{100 \times 256}$ where $A_{ij} \sim \mathcal{N}(0,1)$ and then normalize each column to have a unit norm. Then, we set $b = Ax$. We fix the $\epsilon = 0.1$ and run the algorithms until the $\ell_2$ distance between consecutive iterates is less than or equal to $10^{-3}$. We initialize each algorithm with the unweighted $\ell_1$ norm solution. Finally, we repeat this experiment $200$ times for changing sparsity levels $s = 20,22,...60$. 

We use MOSEK \cite{mosek} through the CVXPY interface \cite{diamond2016cvxpy} to solve linear programs for the conditional gradient steps of the RWL1 and the RWL1 Split algorithms, and quadratic programs for the proximal steps of the RWL1 Prox algorithm. 

\begin{figure}[ht]
\centering
\includegraphics[width=.325\textwidth]{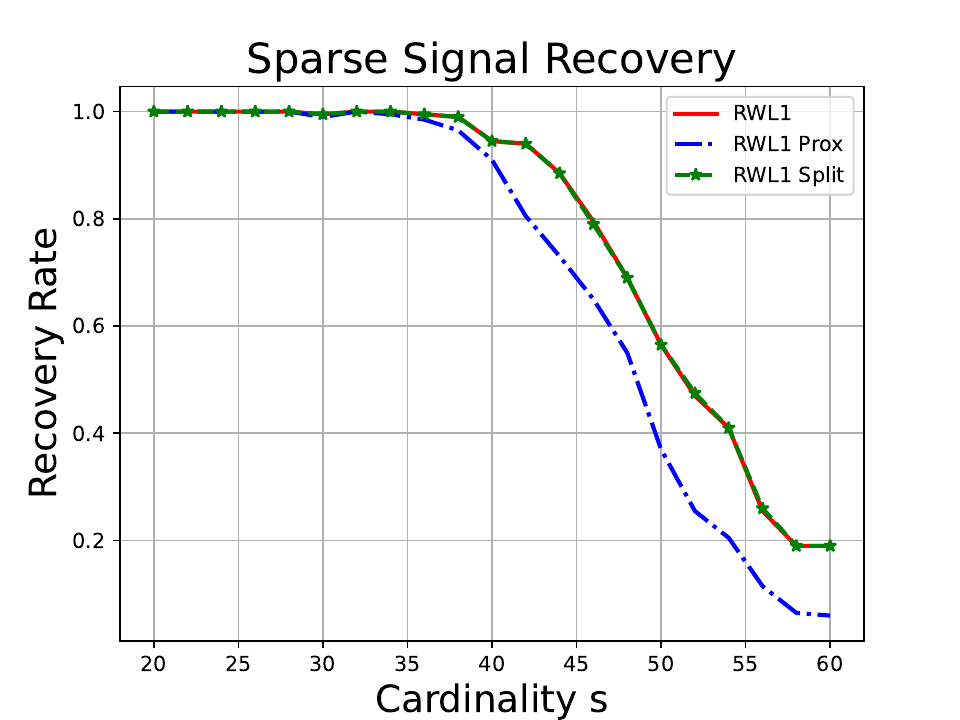}
\includegraphics[width=.325\textwidth]{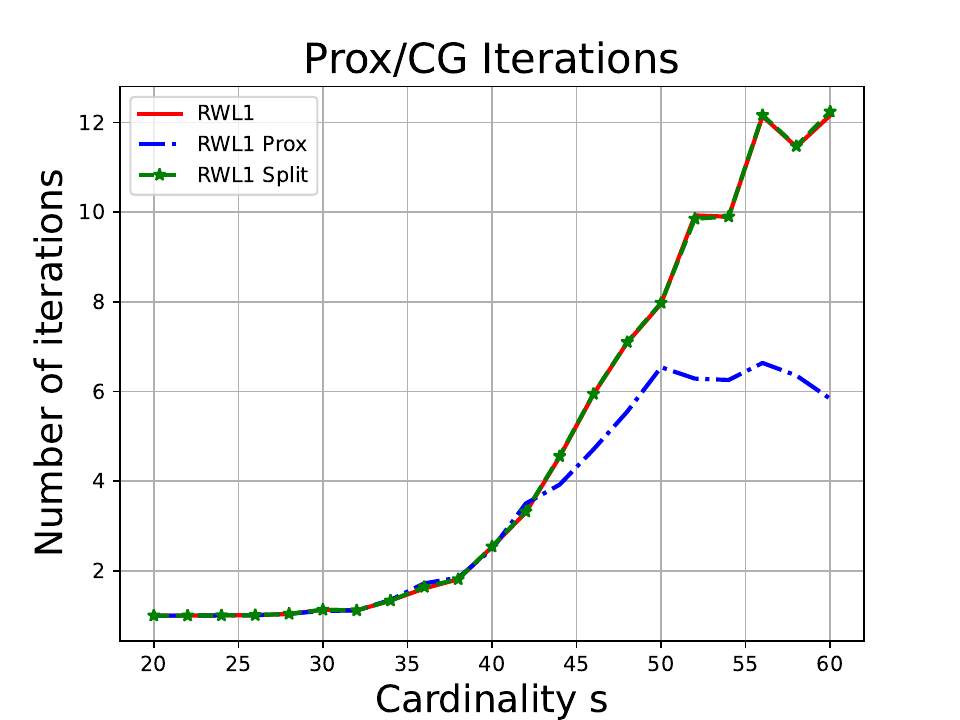}
\includegraphics[width=.325\textwidth]{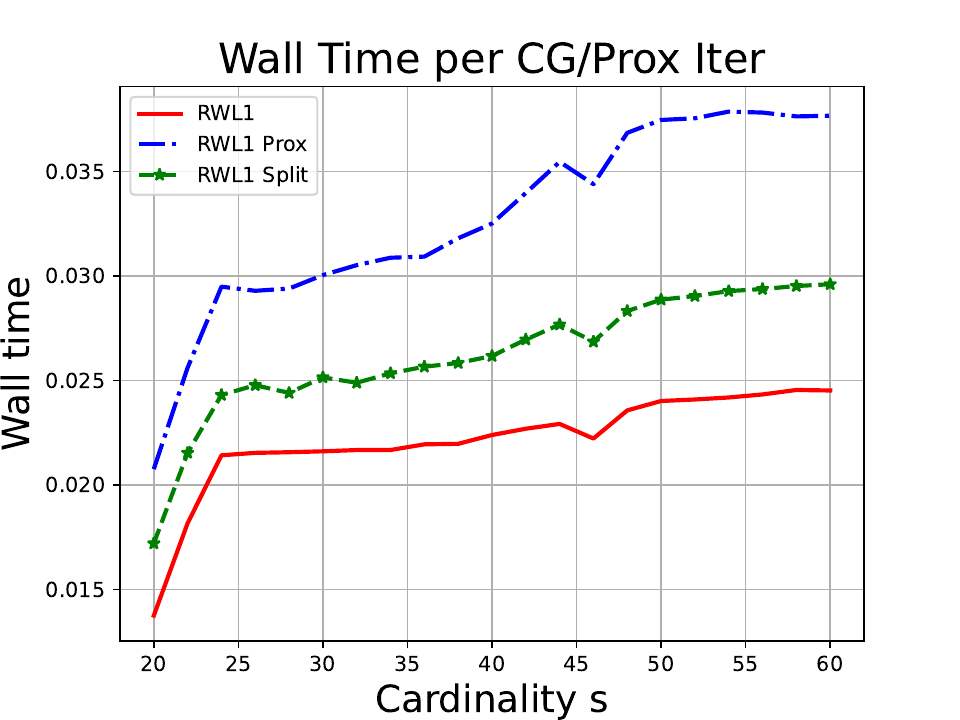}
\caption{Empirical comparison of the RWL1, RWL1 Prox, and RWL1 Split algorithms on sparse signal recovery problems for changing cardinality level}
\label{fig:rwl1_test}
\end{figure}

Figure \ref{fig:rwl1_test} shows that the RWL1 and RWL1 Split algorithms behave almost identically. In fact, out of all the experimental runs, the recovery of these algorithms differs only in 5 examples. Although theoretically, the computational burden of both the RWL1 and the RWL1 Split algorithms is the same, in practice, the RWL1 Split seems slightly slower. This may be caused by the formulation where we directly use $\ell_1$ norm minimization for the RWL1 algorithm, whereas we use the decoupled weighted sum for the RWL1 Split algorithm. Perhaps, internal handling of the direct $\ell_1$ norm is more efficient, hence causing a performance discrepancy. On the other hand, an opposite effect happens when we switch the solver from MOSEK to CLARABEL \cite{Clarabel_2024}; then the RWL1 Split algorithm works faster than the original RWL1 algorithm. Thus, solver capability and the choice of formulation seem to have a notable effect on the practical performance. Furthermore, RWL1 Prox seems to fall behind in convergence rates, while also terminating early. This is likely because the proximal regularization makes the algorithm very conservative, and therefore, the algorithm takes very small step sizes. Also, since the RWL1 Prox algorithm requires a solution of a quadratic program instead of a linear program, the computational burden per subproblem is slightly higher. 

We remark that the RWL1 algorithm, as described in (\ref{eq:rwl1_algorithm}), has a finite last-iterate convergence guarantee since it minimizes a concave function over a polyhedral set with the GFW algorithm \cite{mangasarian1996machine}. This was not pointed out in \cite{candes2008enhancing}. Similarly, finite last-iterate convergence applies to RWL1 Split as described in (\ref{eq:rwl1_star}). However, this result is highly dependent on the polyhedrality of the feasible set $\setX$. If we consider the noisy version of the model (\ref{eq:rwl1}), where we change $Ax^+-Ax^-=b$ with $\|Ax^+-Ax^--b\|_2 \leq \delta$ for some $\delta > 0$, we lose the convergence guarantee for GFW applied to this model. In contrast, since in our modified model (\ref{eq:rwl1_star}) $-f$ satisfies strong convexity and smoothness, our last-iterate convergence results still apply in the noisy case because the function $F$ would still satisfy the KL property. This modification also changes the numerical performance of the algorithms, as we now illustrate.

We use the same numerical setup as before, except we change the construction of the observation vector $b$ and the linear constraints $Ax^+ - Ax^- = b$ as follows: we first generate the noise term $z \in \RR^{100}$ such that $z_i \sim \mathcal{N}(0,10^{-3})$ then set $b = Ax + z$, and change the linear constraints with the norm constraint $\|Ax^+ - Ax^- - b\|_2 \leq \|z\|$. We chose the variance of the noise term to be small so that $\|z\|$ is roughly equal to $0.01$. 

\begin{figure}[ht]
\centering
\includegraphics[width=.325\textwidth]{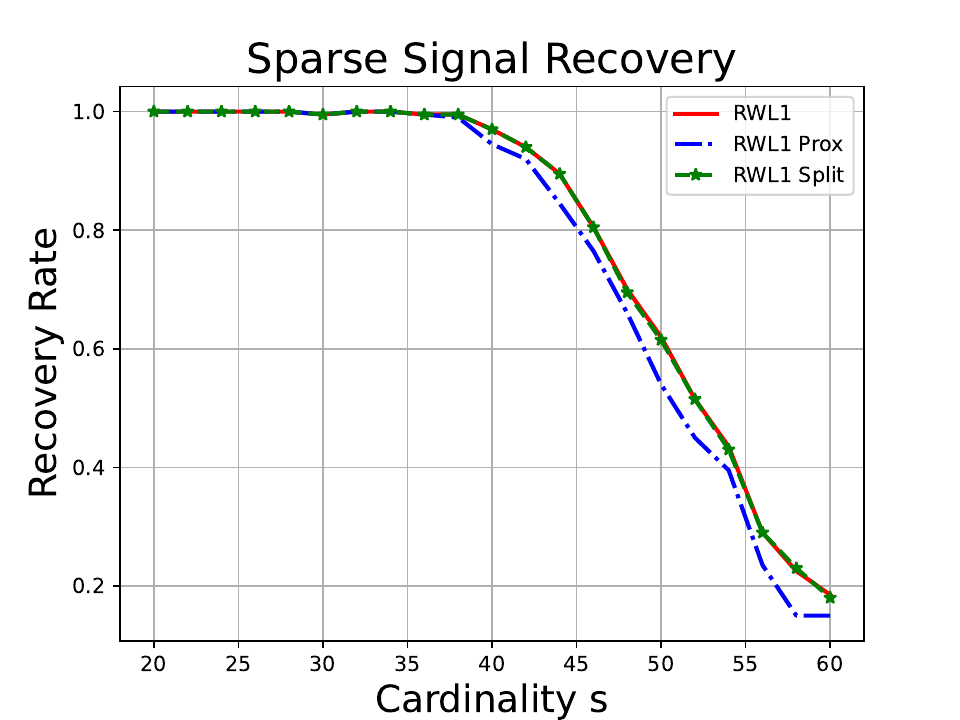}
\includegraphics[width=.325\textwidth]{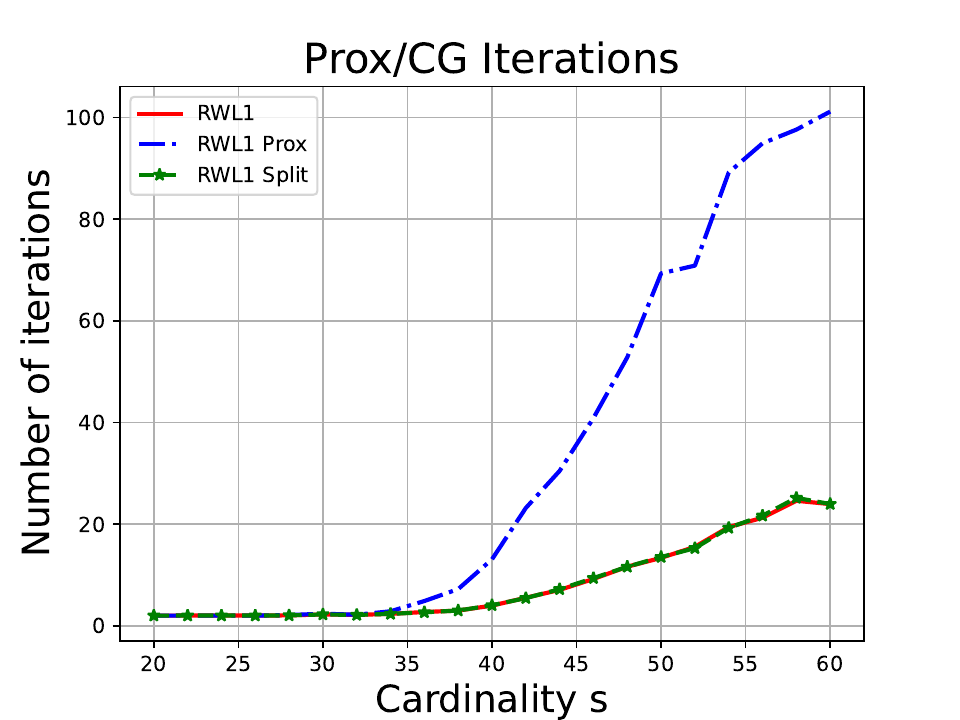}
\includegraphics[width=.325\textwidth]{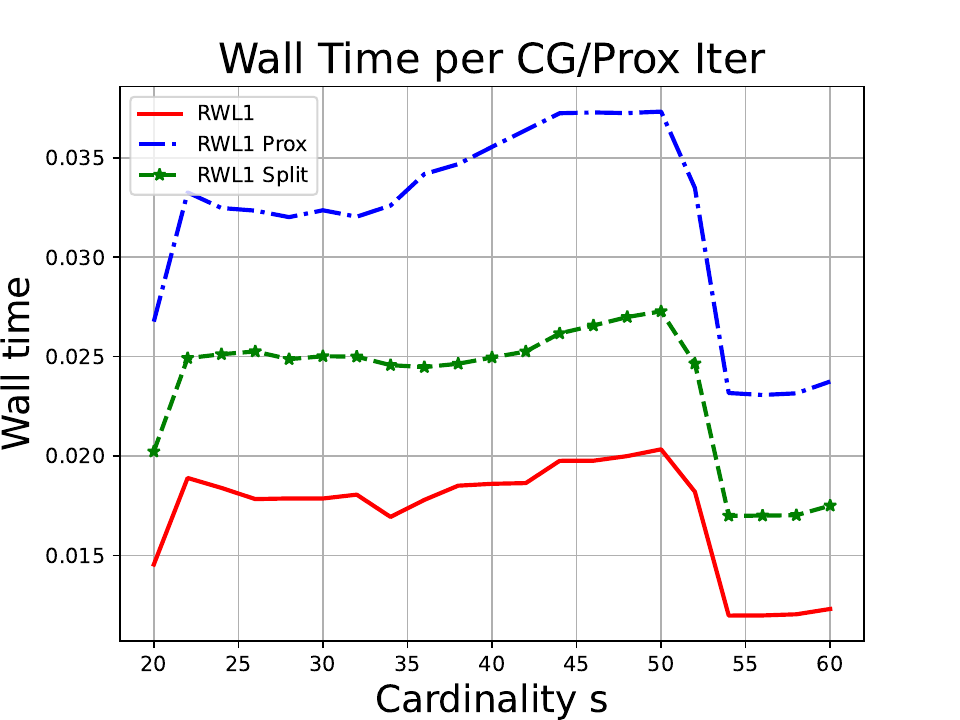}
\caption{Empirical comparison of the RWL1, RWL1 Prox, and RWL1 Split algorithms on noisy sparse signal recovery problems for changing cardinality level}
\label{fig:rwl1_noisy_test}
\end{figure}

Figure \ref{fig:rwl1_noisy_test} shows that the RWL1 Prox algorithm now has a relatively higher recovery rate while still falling behind the RWL1 and the RWL1 Split algorithms. However, the RWL1 Prox algorithm is now much more expensive in terms of the number of proximal iterations. It seems that the RWL1 Prox algorithm progresses very slowly in the polyhedral setting and meets a stopping criterion very fast, hence achieves a lower recovery rate relatively, whereas this phenomenon does not occur in the second case. This experiment numerically shows the superiority of the RWL1 Split algorithm over the RWL1 Prox algorithm in terms of both the computational cost and recovery rates. Additionally, the RWL1 Split and the RWL1 algorithms also demonstrate very similar numerical behavior in this experiment as well.

In summary, we propose a new reweighted $\ell_1$ minimization algorithm, RWL1 Split, that is only slightly different than the original RWL1 algorithm. The RWL1 Split algorithm numerically behaves almost identically to the original RWL1 algorithm, such as in terms of sparse signal recovery, number of CG iterates until meeting a stopping criterion, and the subproblem cost at each iteration. The RWL1 Split algorithm retains its convergence properties when the feasible region is changed. Hence, through a modeling trick, we developed an algorithm that is more flexible in terms of convergence guarantees. More broadly, we can extend this modeling trick to devise theoretically convergent versions of the reweighted trace heuristic (RTH) algorithm for low-rank matrix recovery \cite{mohan2010reweighted} and the PCA Sparsified algorithm for sparsity constrained PCA problem \cite{aktacs2023pca}. Furthermore, we showed that the RWL1 Split algorithm is numerically superior to the RWL1 Prox algorithm, a proximally regularized version of RWL1 devised using the alternating minimization framework.

\subsection{Sparse PCA Lower Bound Approach}

The problem of interest in this section is the following: given an $n$-by-$n$ symmetric matrix, i.e. $A \in \mathbb{S}^{n}$, and an integer $k \in \{1,2,...,n\}$, we seek to find normalized linear factors that maximally correlate with the matrix $A$, while using at most $k$ nonzero components. The mathematical formulation of this problem leads to the following model

\begin{equation}
\label{eq:sparse_pca}
\begin{array}{ll@{}ll}
\underset{x}\max \quad & x^TAx  \\
\text{st.} \quad & \|x\|_2 = 1, \\
& \|x\|_0 \leq k. \\
\end{array}
\end{equation}

This problem is known as sparse principal component analysis (SPCA). This problem is a difficult nonconvex problem since it consists of maximizing a convex quadratic function over a compact set. However, this problem fits our framework perfectly. To solve this problem, a simple gradient scheme is proposed in \cite{luss2013conditional} that maximizes the gradient lower bound. 

\begin{equation}
\label{eq:gpower_algorithm}
\begin{array}{ll@{}ll}
x_{n+1} \in \underset{x}\argmax \left\{x_n^TAx: \|x\|_2 = 1, \|x\|_0 \leq k \right\}.\\
\end{array}
\end{equation}

Convergence analysis of this algorithm was mostly limited to Theorem \ref{thm:classic_convergence}, where the convergence of the point sequence $\{x_k\}_{k\in \NN}$ was unclear. However, we can apply Theorem \ref{thm:global_convergence_gfw} by modifying the problem slightly; this modification is well-known and is also used e.g. by \cite{luss2013conditional}. Since we have a spherical constraint ($\|x\|_2 = 1$), we can instead solve the problem

\begin{equation}
\label{eq:sparse_pca_strongly_convex}
\begin{array}{ll@{}ll}
\underset{x}\max \quad & x^T(A + \sigma I_n)x  \\
\text{st.} \quad & \|x\|_2 = 1, \\
& \|x\|_0 \leq k. \\
\end{array}
\end{equation}

where $\sigma$ is some positive constant and $I_n$ is the $n$ by $n$ identity matrix. Clearly the problems (\ref{eq:sparse_pca}) and (\ref{eq:sparse_pca_strongly_convex}) admit an identical set of optimal solutions. The latter reformulation allows us to assume strong convexity on $-f$ without loss of generality. Also, in both formulations, $f$ has a constant Hessian, so it satisfies the smoothness criterion. Then, defining the quantities $-f = x^T(A + \sigma I_n)x$, $\setX = \{x \in \RR^n:\|x\|_2 = 1, \|x\|_0 \leq k\}$, and $F(x) = f(x) + \I_{\setX}(x)$, we can recognize that the function $F$ is semialgebraic. Therefore, both the global convergence result given by Theorem \ref{thm:global_convergence_gfw} and the convergence rate result described by Theorem \ref{thm:convergence_rate_gfw} apply. We do not report numerical results for this application as the algorithm is known to work well in practice \cite{luss2013conditional}; instead, we show a new convergence result. 

We also note that in \cite{luss2013conditional}, alternatives to the model (\ref{eq:sparse_pca}) are discussed. One such approach is the relaxation of $\ell_0$ norm constraint to a $\ell_1$ norm constraint, and then the GFW algorithm is applied. The Theorems \ref{thm:global_convergence_gfw} and \ref{thm:convergence_rate_gfw} apply for that instance as well. 

\subsection{Simple Parallel Algorithm for the Max-Cut SDP Formulation}

Semidefinite Programming (SDP) is a popular tool for approximating combinatorial problems \cite{alizadeh1995interior,lovasz1991cones}.  We focus on the following general SDP template, which arises as a convex relaxation to the Max-Cut problem~\cite{goemans1995improved}, graphical model inference~\cite{erdogdu2017inference}, community detection~\cite{bandeira2016low}, and group synchronization \cite{mei2017solving}. 

\begin{equation}
\label{eq:max_cut_sdp}
\begin{array}{lllll}
\underset{Z}\max & \langle A, Z\rangle  \\
\text{st.} & Z_{ii} = 1 \quad \forall i = 1,...,n, \\
& Z  \succeq 0,
\end{array}
\end{equation}

where $A,Z \in \mathbb{S}^{n}$ are symmetric matrices of size $n$ by $n$. Although SDP models have desirable properties theoretically, computing an optimal solution numerically still remains computationally daunting. For instance, interior point methods have an $O(n^6)$ complexity per iteration and large memory requirements. Using interior point methods quickly becomes infeasible for large-scale problems. To alleviate this problem, a common approach is to introduce a low-rank factorization of the variable $Z = BB^T$ where $B \in \RR^{n \times r}$. In terms of the new variable, the model becomes

\begin{equation}
\label{eq:max_cut_bm}
\begin{array}{lllll}
\underset{Z}\max & \langle A, BB^T\rangle  \\
\text{st.} & \|b_{i}\|_2 = 1 \quad \forall i = 1,...,n,
\end{array}
\end{equation}

where $b_i$ denotes the $i$th row i.e. $B = [b_1,b_2,...,b_n]^T$. The advantage of introducing the variable $B$ is that the positive semidefinite cone constraint is removed, and if $r$ is chosen smaller than $n$, both the computations and the storage costs are cheaper. On the other hand, the resulting problem is now nonconvex. This method is called Burer-Monteiro Factorization \cite{burer2003nonlinear}, and this approach was combined with an augmented Lagrangian method to solve an SDP in the standard form. 

For the special SDP template in (\ref{eq:max_cut_bm}), approaches such as block-coordinate maximization \cite{javanmard2016phase,wang2017mixing,erdogdu2022convergence}, Riemannian gradient (RGD) \cite{javanmard2016phase,mei2017solving}, and Riemannnian trust-region (RTR) methods \cite{absil2007trust,boumal2016non,journee2010low} demonstrate better performance than the originally proposed augmented Lagrangian method in practice. Consider the block-coordinate maximization method (BCM) described in Algorithm \ref{alg:max_cut_bm_bcm}.

\begin{algorithm}[ht]
\caption{BCM algorithm for model (\ref{eq:max_cut_bm})}
\label{alg:max_cut_bm_bcm}
\begin{algorithmic}[1]
\STATE{\textbf{Initialize:} $b_i^0 \in \RR^{r} \ \forall i = 1,...,n$ }
\FOR{$k = 0,1,2, ...$}
\FOR{$i = 1,2, ...,n$}
\STATE{$g^{k+1}_{i} = \sum\limits_{j=1}^{i-1} a_{ij}b_j^{k+1} + \sum\limits_{j=i+1}^n a_{ij}b_j^k $}
\STATE{$b^{k+1}_{i} = \frac{g^{k+1}_{i}}{\|g^{k+1}_{i}\|_2} $}
\ENDFOR
\ENDFOR
\end{algorithmic}
\end{algorithm}

The main idea behind the BCM algorithm is to locally maximize the objective over each block $i$. Instead of BCM, we propose the GFW algorithm for this model, which gives Algorithm \ref{alg:max_cut_bm_cg}. We note that the GFW algorithm operates on a slightly different (shifted) model to apply the convergence results, as we explain in the upcoming discussion.

\begin{algorithm}[ht]
\caption{GFW algorithm for model (\ref{eq:max_cut_bm_shifted})}
\label{alg:max_cut_bm_cg}
\begin{algorithmic}[1]
\STATE{\textbf{Initialize:} $B \in \RR^{n \times r}$ }
\FOR{$k = 0,1,2, ...$}
\STATE{$G^{k+1} = AB^k $}
\STATE{$D_{ii}^{k+1} = \frac{1}{\|G^{k+1}e_i\|_2} $}
\STATE{$B^{k+1} = D^{k+1}G^{k+1} $}
\ENDFOR
\end{algorithmic}
\end{algorithm}

Although it is not clear from the presentation of the algorithms, the BCM and GFW algorithms are very similar. The key difference is that the BCM algorithm updates each block one by one sequentially, whereas the GFW algorithm updates all blocks at once. In other words, the GFW algorithm parallelizes the block updates.

Since the original model \ref{eq:max_cut_sdp} is diagonally constrained, we can change the diagonal elements of $A$ without changing the set of optimal points. Thus, we can work on the following model without loss of generality

\begin{equation}
\label{eq:max_cut_bm_shifted}
\begin{array}{lllll}
\underset{Z}\max & \langle A + \sigma I_n, BB^T\rangle  \\
\text{st.} & \|b_{i}\|_2 = 1 \quad \forall i = 1,...,n.
\end{array}
\end{equation}

Since we can work with $A + \sigma I_n$ for any $\sigma > 0$, we can assume strong convexity in terms of $b_i$'s by choosing a suitable $\sigma$. Let $b$ denote the flattened version of $B$ i.e. $b = [b_1^T,b_2^T,...,b_n^T]$. Then the Hessian of $f$ with respect to $b$ is $A \otimes I_n$ where $\otimes$ denotes the Kronecker product. The matrix $A \otimes I_n$ has the same eigenvalues as $A$, with multiplicity $n$ for each eigenvalue \cite{horn1994topics}. Thus, setting $\sigma = -\lambda_{\min}(A) + \gamma$ for some $\gamma > 0$ is an appropriate choice for the algorithm where $\lambda_{\min}(A)$ denotes the minimum eigenvalue of $A$. Thus, the objective in (\ref{eq:max_cut_bm_shifted}) is strongly convex for suitably chosen $\sigma$ and smooth since the Hessian is constant. Let us define the following quantities: $-f = \langle A + \sigma I_n, BB^T \rangle $, $\setX = \{B \in \RR^{n\times r}: \|b_i\|_2 = 1 \, \forall i = 1,...,n\}$ and $F(x) = f(x) + I_{\setX}(x)$. The feasible region is compact since it is the Cartesian product of spheres. Hence, Assumption \ref{as:convex_smooth_compact} is satisfied.  Furthermore, we can recognize that $F$ is semialgebraic since $f(x)$ is a real polynomial and the feasible region $\setX$ is a semialgebraic set. Therefore, the global convergence result given by Theorem \ref{thm:global_convergence_gfw} and the rate of convergence result given by Theorem \ref{thm:convergence_rate_gfw} apply.

Notice that although Algorithm~\ref{alg:max_cut_bm_cg} is guaranteed to converge to a first-order stationary point by the global convergence theorem, it may not recover the global optimum. Nevertheless, for sufficiently large $k$, computing a local minimum is enough to recover the optimal solution \cite{burer2005local}. This statement was sharpened to second-order critical points instead of a local minimum for almost all $A$ in \cite{boumal2016non}. Furthermore, local minimums are shown to be high-quality estimates of the global minimum in \cite{mei2017solving}. These properties motivate us to use Algorithm \ref{alg:max_cut_bm_cg}.

Additionally, since the maximizer of a linear function over the sphere is unique, it can be shown that the sequence generated by Algorithm \ref{alg:max_cut_bm_cg} will converge to a first-order strict stationary point. In other words, the following holds;

\begin{equation}
\label{eq:max_cut_bm_stationary}
\begin{array}{lllll}
\langle AB^*, B-B^* \rangle < 0 \quad \forall B \in \setX, B \neq B^*
\end{array}
\end{equation}

where $B^*$ is the limit point of the sequence generated by \ref{alg:max_cut_bm_cg}. Remark that this also shows that $B^*$ satisfies the second-order necessary conditions since the tangent space is empty \cite{bertsekas1997nonlinear}. Nevertheless, this observation does not allow us to invoke the global optimality proved in \cite{boumal2016non} since their characterization requires the Riemannian Hessian rather than the Euclidean Hessian. 

We now present computational results illustrating the empirical performance of the GFW algorithm. GFW and BCM algorithms are implemented on MATLAB. We also implemented RGD and RTR algorithms using the Manopt package \cite{boumal2014manopt}. In addition, we implemented an algorithm that starts with the GFW algorithm and switches to RTR when the magnitude of the Riemannian gradient is small, as the second-order information is useful near the limit point. This was motivated and proposed in \cite{erdogdu2022convergence}. 

In the experiments, we generate symmetric matrices $A = (G+ G^T)/n$ where each $G_{ij}$ is sampled from a standard normal distribution for $i,j \in \{1,2,..,n\}$ with the problem size $n= 20,000$. We initialize $B \in \RR^{n \times r}$ randomly on the Cartesian product of spheres, set $r = \lceil{\sqrt{2n}}\rceil$, and fix $\sigma = 25\times10^{-4}$. We set the maximum wall time as $60$ seconds for all algorithms and repeat the experiment $50$ times.

In the same setup as described in the previous paragraph, we also demonstrate the effect of the choice of the shifting parameter $\sigma$ to guarantee strong convexity. For that purpose, we try $4$ different values, $\sigma_1 = 0$, $\sigma_2 = 10^{-3}$, $\sigma_3 = 25\times10^{-4}$ and $\sigma_4 = -\lambda_{min}(A) + 0.1$. We note that the first three choices of $\sigma$ do not guarantee strong convexity. 

\begin{figure}[ht]
\centering
\includegraphics[width=.49\textwidth]{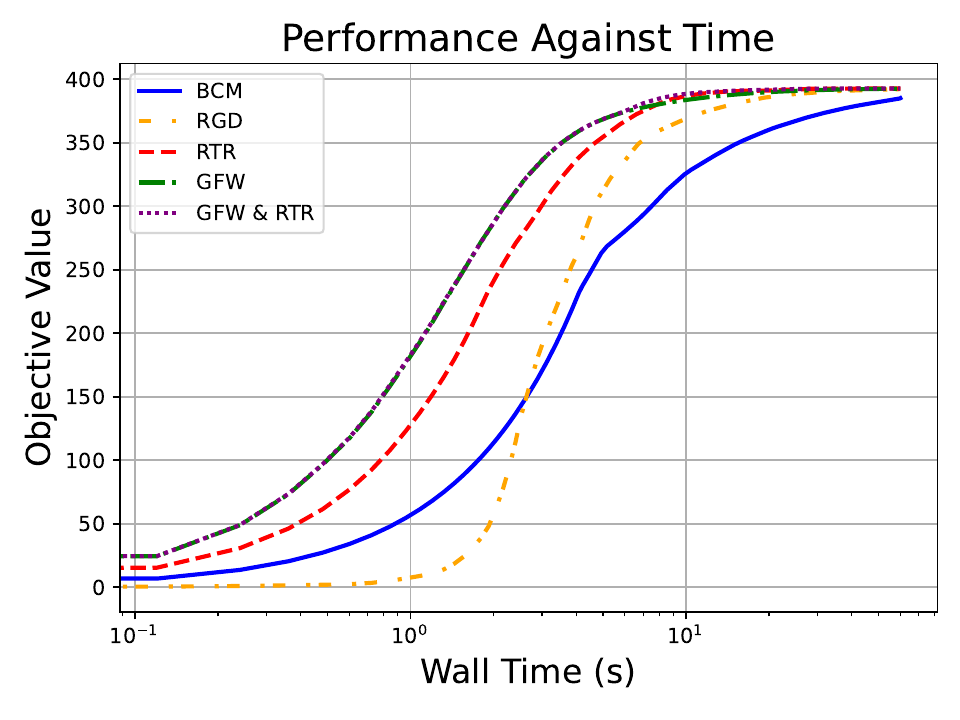}
\includegraphics[width=.49\textwidth]{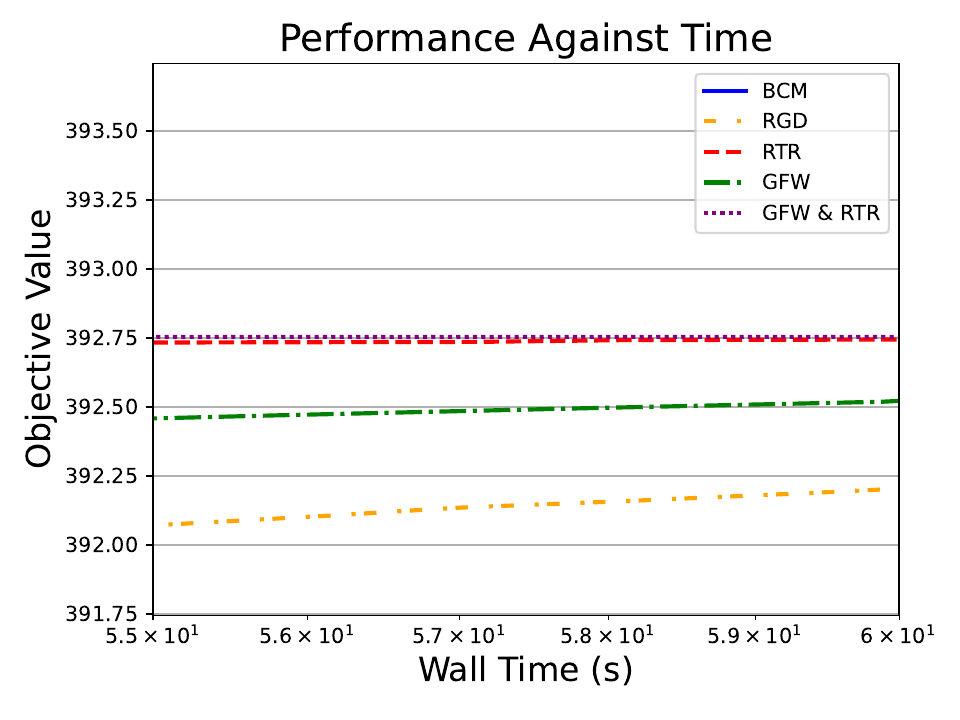}
\caption{Empirical comparison of various algorithms for solving the Max-Cut SDP Relaxation. The figure on the left shows the objective value against the wall time. GFW and GFW \& RTR algorithms overlap until $t \approx 7$ seconds. The figure on the right provides a zoomed-in view, focusing on the final $5$ seconds.}
\label{fig:max_cut_test}
\end{figure}

Figure \ref{fig:max_cut_test} shows that the GFW algorithm is much faster than the BCM algorithm. The final objective value achieved by the BCM algorithm is surpassed by the GFW algorithm within 10 seconds. Similarly, the GFW algorithm beats the RGD algorithm as well. While we observe early fast convergence for the GFW algorithm, the RTR algorithm seems to catch up and even surpass it with a small margin. This is likely because of the accurate second-order information used by the RTR algorithm, which allows it to discover highly accurate solutions near the limit point. The combined algorithm GFW \& RTR enjoys both the early fast convergence and later accurate solutions. Thus, the plots suggest that the best algorithm is the combined GFW \& RTR algorithm.

The previous experiments were performed on an Intel i7-13700K CPU. Next we investigate the numerical performance when GFW and RTR are run on GPUs, via Google Colab on an NVIDIA A100 GPU. We have implemented GFW using CuPy \cite{cupy_learningsys2017} and RTR using JAX \cite{jax2018github} via Pymanopt \cite{pymanopt2016}. We also increased the problem size to $n=30,000$ and set $\sigma = 50/n$. The rest of the setup is the same. 
Because of the complexity of measuring runtime performance on GPU, our numerical measurements for the GPU experiments are much coarser. 
Specifically, since the computations are done asynchronously on the GPU, enforcing a strict time limit for the number of computations is not possible to do reliably. 
Moreover, Pymanopt does not report the cost function throughout the iterations or the time spent between iterations.
Because of these issues, we roughly estimate the execution time and read the objective value at $t\approx 10$, $t\approx 20$, and $t\approx 40$ seconds for the GFW algorithm. 
For RTR, we give a time limit of $t = 10$, $t = 30$, and $t = 60$ seconds and check the actual time spent as well as the objective value at that time (we give RTR additional time because even with this additional time, GFW performs better, and RTR often exceeds its time limit, which is why we report e.g. $42.1$ seconds for $t=30$). We report the mean value of the time spent and the objective value computed by the algorithms.

\begin{table}[ht]
\centering
\begin{tabular}{lcccccl}\toprule
& \multicolumn{2}{c}{RTR} & \multicolumn{2}{c}{GFW}
\\ \cmidrule(lr){2-3} \cmidrule(lr){4-5}
           & $f$            & Time (s)         & $f$        &  Time (s)    \\ \midrule
Reading 1  & 530.6896  & 12.2 & 532.3243  & 10.3 \\
Reading 2 & 532.3255  & 42.1 & 532.3379  & 20.6 \\
Reading 3 & 532.3384  & 78.2 & 532.3399 & 41.3 \\ \bottomrule
\end{tabular}
\caption{Empirical comparison RTR and GFW for solving the Max-Cut SDP Relaxation on a GPU}
\label{table:gpu}
\end{table}

Table \ref{table:gpu} shows that when the algorithms are implemented on a GPU, the plain GFW algorithm is much faster than RTR; GFW finds a highly accurate solution after ten seconds, whereas RTR takes about 40 seconds to do so. This can be attributed to the fact that the GFW algorithm is well-suited for parallel computing.

\begin{figure}[ht]
\centering
\includegraphics[width=.49\textwidth]{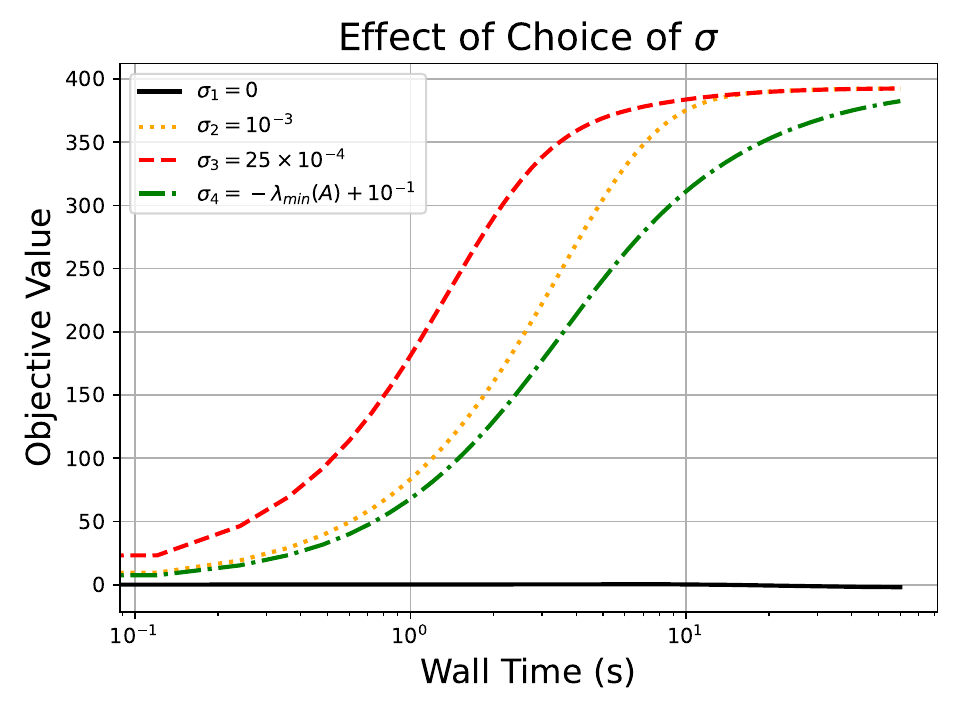}
\includegraphics[width=.49\textwidth]{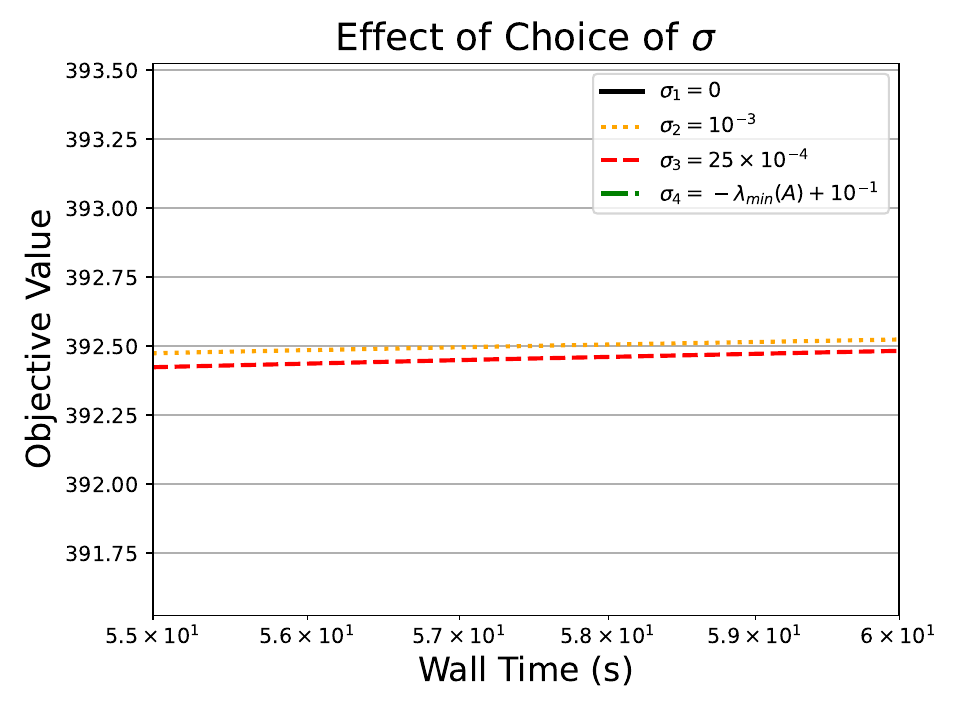}
\caption{Empirical performance of the GFW algorithm for varying shifting parameter $\sigma$ on the Max-Cut SDP relaxation. The figure on the left shows the objective value against the wall time. The figure on the right provides a zoomed-in view, focusing on the final $5$ seconds.}
\label{fig:max_cut_test2}
\end{figure}

Finally, Figure \ref{fig:max_cut_test2} investigates the importance of the choice of $\sigma$. For the value of zero, the algorithm does not work. For small but large enough values, the algorithm enjoys a quick convergence, while setting $\sigma$ very large seems to slow down the convergence. Finally, we remark that, although it seems like $\sigma_3$ is the best choice in terms of objective value convergence, $\sigma_2$ is ahead by a tiny margin towards the end.

\section{Conclusion}
In this paper, we proved new convergence results for the GFW algorithm applied to strongly convex maximization problems under mild assumptions. Based on the new convergence result, we show additional convergence properties for a sparse principal component analysis algorithm,  derive a theoretically convergent reweighted $\ell_1$ algorithm, and propose a new algorithm for the semidefinite relaxation of the Max-Cut problem. It would be of interest to sharpen the convergence results to a more precise form by deriving explicit bounds for the constants, which we leave as further research. 

\section*{Acknowledgments}
We are grateful to Santiago Balseiro for their helpful discussions and feedback in the initial stages of the paper.

\bibliographystyle{ieeetr}   
\bibliography{references}

\appendix

\section{Convergence to a Strict Local Minimum over Polyhedra}
\label{ap:local_min_polyhedra}
In this section, we show that by slightly changing Algorithm \ref{alg:gfw}, we can show convergence to a strict local minimum instead of a stationary point when $\setX$ is a polytope. 

Let us recall the following first-order sufficient condition for strict local minimality when the feasible set is a polytope. 
\begin{proposition}
\label{prop:polyhedra_local_min}
Let $g: \RR^d \mapsto \RR$ be a continuously differentiable function and $\setX \subset \RR^d$ be a nonempty polytope. Let $x^*$ be strictly stationary. In other words, $x^*$ satisfies

\begin{equation}
\label{eq:strict_stationary}
\begin{array}{lllll}
\nabla g(x^*) ^T(y-x^*) < 0, \quad \forall y \in \setX, y\neq x^*.
\end{array}
\end{equation}

Then $x^*$ is a strict local maximum of $g$ over $\setX$.
\end{proposition}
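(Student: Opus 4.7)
The plan is to leverage the polytope structure to upgrade the strict inequality $\nabla g(x^*)^T(y-x^*) < 0$ into a quantitative linear upper bound of the form $\nabla g(x^*)^T(y-x^*) \leq -c\|y-x^*\|$ for some $c > 0$, uniformly over all $y \in \setX$. Once this is in hand, a first-order Taylor expansion of $g$ at $x^*$ immediately gives $g(y) - g(x^*) \leq -c\|y-x^*\| + o(\|y-x^*\|) < 0$ for every $y \in \setX$ sufficiently close to but distinct from $x^*$, which is the definition of a strict local maximum.

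First I would observe that $x^*$ must be an extreme point of $\setX$. Indeed, if $x^*$ were a convex combination $\sum_i \lambda_i v_i$ of extreme points $v_i$ of $\setX$ with at least two nonzero coefficients, then $0 = \nabla g(x^*)^T(x^*-x^*) = \sum_i \lambda_i \nabla g(x^*)^T(v_i - x^*)$, forcing at least one term $\nabla g(x^*)^T(v_i - x^*) \geq 0$ with $v_i \neq x^*$, contradicting strict stationarity. Since $\setX$ is a polytope, it has finitely many extreme points; let $v_1,\ldots,v_m$ denote those different from $x^*$.

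Next I would define the two constants that encode the polyhedral geometry:
\begin{equation*}
\delta := -\max_{1 \leq i \leq m} \nabla g(x^*)^T(v_i - x^*) > 0, \qquad M := \max_{1 \leq i \leq m} \|v_i - x^*\| > 0.
\end{equation*}
Strict stationarity makes $\delta$ strictly positive, and $M$ is finite because there are finitely many extreme points. For an arbitrary $y \in \setX$, write $y = \lambda_0 x^* + \sum_{i=1}^m \lambda_i v_i$ with $\lambda_i \geq 0$ and $\sum_{i=0}^m \lambda_i = 1$. Linearity gives $\nabla g(x^*)^T(y - x^*) \leq -\delta(1-\lambda_0)$, while the triangle inequality yields $\|y - x^*\| \leq M(1-\lambda_0)$. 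Combining these two estimates produces the key linear bound $\nabla g(x^*)^T(y - x^*) \leq -(\delta/M)\|y - x^*\|$ valid for every $y \in \setX$.

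Finally I would plug this bound into the first-order expansion $g(y) = g(x^*) + \nabla g(x^*)^T(y-x^*) + r(y)$ with $r(y)/\|y-x^*\| \to 0$ as $y \to x^*$, choose a neighborhood of $x^*$ on which $|r(y)| \leq \tfrac{\delta}{2M}\|y-x^*\|$, and conclude that $g(y) - g(x^*) \leq -\tfrac{\delta}{2M}\|y-x^*\| < 0$ for $y \in \setX$ near $x^*$, $y \neq x^*$. The only conceptually nontrivial step is the passage from the pointwise strict inequality to a uniform linear bound; this is precisely where the polyhedral assumption is essential, since on a general compact set one cannot reduce to finitely many extreme directions and $\delta$ could fail to be strictly positive.
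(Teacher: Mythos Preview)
Your proof is correct, and it takes a genuinely different route from the paper's. The paper argues by contradiction: assuming no strict local maximum, it builds a sequence $x_k \to x^*$ along which $g(x_k) > g(x^*) - \tfrac{1}{k}\|x_k - x^*\|$, normalizes the directions $p_k = (x_k - x^*)/\|x_k - x^*\|$, extracts a subsequential limit $\bar p$, and uses the polytope assumption only at this point---namely, that a limit of feasible directions at $x^*$ is again a feasible direction---to derive $\nabla g(x^*)^T \bar p \geq 0$, contradicting strict stationarity. You instead work directly and constructively: after observing that $x^*$ is extreme, you use the Minkowski representation $y = \lambda_0 x^* + \sum_i \lambda_i v_i$ over the finitely many other vertices to manufacture the explicit linear bound $\nabla g(x^*)^T(y-x^*) \leq -(\delta/M)\|y-x^*\|$, from which the strict local maximum follows by a first-order expansion. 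Your approach exploits polyhedrality through finiteness of extreme points rather than closedness of the feasible-direction cone; the payoff is that you obtain explicit constants ($\beta = \delta/(2M)$ and a corresponding radius), whereas the paper's sequential-compactness argument is nonconstructive. Both uses of the polytope hypothesis are essential and neither subsumes the other, but your version is arguably more informative since it quantifies the conclusion.
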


\begin{proof}
We will show that there exist $\beta >0$ and $r > 0$ such that $g(x_k) \leq g(x^*) - \beta \|x-x^*\|$ for all $x \in \setX$ satisfying $\|x-x^*\| \leq r$. Assume to the contrary that there exists a sequence $\{x_k\}_{k \in \NN} \subseteq \setX$ converging to $x^*$ such that $g(x_k) > g(x^*) - \frac{1}{k}\|x-x^*\|$ for all $k$. Then, define the following vector

\begin{equation}
\label{eq:feasible_direction}
\begin{array}{lllll}
p_k = \frac{x_k-x^*}{\|x_k-x^*\|_2}
\end{array}
\end{equation}

$p_k$ is feasible direction at $x^*$. The sequence $\{p_k\}_{k\in \NN}$ lies on a compact set and, therefore, there exists a convergent subsequence. For convenience, assume that the sequence $p_k$ converges to some nonzero $\Bar{p}$ without loss of generality. Since $\setX$ is a polytope, $\Bar{p}$ is also a feasible direction. Then by Taylor's Theorem, we have,

\begin{equation}
\label{eq:feasible_direction_taylor}
\begin{array}{rllll}
g(x_k) &= g(x^*) + \nabla g(x^*)^T(x_k-x^*) + O(\|x_k-x^*\|) \\
-\frac{1}{k} &< \nabla g(x^*)^T( \frac{x_k-x^*}{\|x_k-x^*\|}) + \frac{O(\|x_k-x^*\|)}{\|x_k-x^*\|} \\
0 &\leq \nabla g(x^*)^T\Bar{p} 
\end{array}
\end{equation}
In the first line, we used a first-order approximation. In the second line, we used the assumption that $g(x_k) > g(x^*) - \frac{1}{k}\|x-x^*\|$ and divided both sides of the equation by $\|x_k-x^*\|$. In the last line, we let $k$ go to infinity. The final equation creates a contradiction to (\ref{eq:strict_stationary}). In conclusion, $x^*$ must be a strict local maximum. 
\end{proof} 

A substantially more general version of the Proposition \ref{prop:polyhedra_local_min} was proved in \cite{maurer1979first}. Remark that (\ref{eq:strict_stationary}) is also the second-order necessary condition when $g$ is strongly convex and $\setX$ is convex. To the contrary, if there exists a $y \in \setX, y\neq x$ such that $g(x^*) ^T(y-x^*) = 0$. Then, $\{x^*+ \frac{1}{n}y\}$ is a sequence of points in $\setX$ that converges to $x^*$ with a higher objective. Additionally, even if $\setX$ is not convex, although $x$ might be a local minimum, point $y$ still improves upon $x$ in terms of objective value. This motivates checking alternative solutions when applying the maximization step in the GFW algorithm. In particular, while working over a polytope, if we can check for alternative optimal solutions, we can enforce the strict stationarity condition (\ref{eq:strict_stationary}).  Finally, this augmentation forces the GFW algorithm to converge to a strict local maximum instead of a stationary point by Proposition \ref{prop:polyhedra_local_min} in a finite number of steps since the objective is increasing to a limit and there are finitely many extreme points of $\setX$.

\end{document}